\documentclass[a4paper,12pt]{amsart}

\usepackage{amsmath}
\usepackage{amssymb}
\usepackage{ascmac}
\usepackage{amsthm}
\usepackage{color}
\usepackage{comment}
\usepackage{empheq}
\usepackage[dvipdfmx]{graphicx}
\usepackage[abbrev]{amsrefs}
\usepackage{enumitem}

\setcounter{tocdepth}{1}

\oddsidemargin=-0.1in
\evensidemargin=-0.1in
\topmargin=-0.3in
\textwidth=6.5in
\textheight=9.7in

\makeatletter

\@addtoreset{equation}{section}
\makeatother 

\theoremstyle{plain}
\newtheorem{thm}{Theorem}[section]
\newtheorem*{thm*}{Theorem}
\newtheorem{prop}[thm]{Proposition}
\newtheorem{lem}[thm]{Lemma}

\theoremstyle{definition}
%[section]

\theoremstyle{remark}
\newtheorem{rem}[thm]{Remark}%[section]

\newcommand{\Ric}{\operatorname{Ric}}

\newcommand{\Hess}{\operatorname{Hess}}

\newcommand{\tr}{\operatorname{tr}}

\newcommand{\second}{{\rm I\hspace{-.01em}I}}

\newcommand{\Ind}{\operatorname{Ind}}

\newcommand{\eps}{\varepsilon}

\newcommand{\g}{\mathsf{g}}
\newcommand{\SL}{\mathsf{L}}
\newcommand{\D}{\mathsf{D}}

\newcommand{\e}{\mathrm{e}}
\renewcommand{\d}{\mathrm{d}}

\usepackage{latexsym}

\makeatletter
\@namedef{subjclassname@2020}{%
  \textup{2020} Mathematics Subject Classification}
\makeatother

%%%%%%%%%%%%%%%%%%%%%%%%%%%%
%%%%%%%%%%%%%%%%%%%%%%%%%%%%
%%%%%%%%%%%%%%%%%%%%%%%%%%%%
\title[Stability of weighted minimal hypersurfaces]{Stability of weighted minimal hypersurfaces\\ under a lower $1$-weighted Ricci curvature bound}

\author{Yasuaki Fujitani}
\address{Graduate School of Mathematical Sciences, The University of Tokyo, 3-8-1 Komaba, Tokyo, 153-8914, Japan}
\email{yasuakifujitani@g.ecc.u-tokyo.ac.jp}

\author{Yohei Sakurai}
\address{Department of Mathematics, Saitama University, 255 Shimo-Okubo, Sakura-ku, Saitama-City, Saitama, 338-8570, Japan}
\email{ysakurai@rimath.saitama-u.ac.jp}

\subjclass[2020]{Primary 53C42; Secondary 53A10}
\keywords{Weighted Ricci curvature; Weighted minimal surface; Stability}
\date{February 6, 2026}
\begin{document}
\maketitle
\begin{abstract}
We will study the $1$-weighted Ricci curvature in view of the extrinsic geometric analysis.
We derive several geometric consequences concerning stable weighted minimal hypersurfaces in weighted manifolds under a lower $1$-weighted Ricci curvature bound.
We prove a Schoen-Yau type criterion, and conclude a structure theorem for three-dimensional weighted manifolds of non-negative $1$-weighted Ricci curvature.
We also show non-existence results under volume growth conditions, and conclude smooth compactness theorems.
\end{abstract}

%%%%%%%%%%%%%%%%%%%%%%%%%%%%
\section{Introduction}
Let $(M,g,f)$ be an $n$-dimensional weighted manifold,
namely,
$(M,g)$ is an $n$-dimensional Riemannian manifold, and $f\in C^{\infty}(M)$,
which is endowed with the \textit{weighted measure}
\begin{equation}\label{eq:volume}
m_f := \e^{-f} \,v_g.
\end{equation}
For $N\in (-\infty,+\infty]$,
the \textit{$N$-weighted Ricci curvature} is defined as follows:
\begin{equation*}
\Ric_f^N:=\Ric + \Hess f-\frac{\d f\otimes \d f}{N-n}.
\end{equation*}
For $N = +\infty$, 
we interpret the last term as the limit $0$, 
and for $N = n$, 
we only consider a constant function $f$ and set $\Ric_f^n := \Ric$.
The parameter $N$ has been chosen from $[n,+\infty]$ in the classical metric measure geometry,
but in the last decade,
there have been progresses for the understanding of its role in $(-\infty,n)$ by Ohta \cite{O}, Klartag \cite{K}, Kolesnikov-Milman \cite{KM1}, \cite{KM2}, Milman \cite{M}, Wylie \cite{W2} and so on.
It enjoys the following monotonicity with respect to $N$:
For $N_1,N_2 \in [n,+\infty]$ with $N_1\leq N_2$,
and for $N_3,N_4 \in (-\infty,n)$ with $N_3\leq N_4$,
\begin{equation}\label{eq:monotonicity}
\Ric_f^{N_1}\leq  \Ric_f^{N_2} \leq \Ric_f^{\infty} \leq  \Ric_f^{N_3} \leq \Ric_f^{N_4}.
\end{equation}

In the present article,
we will focus on the case of $N=1$.
The $1$-weighted Ricci curvature has been investigated from various perspectives including comparison geometry (see e.g., \cite{W2}, \cite{WY}, \cite{Sa1}),
weighted sectional curvature (see e.g., \cite{KWY}),
Bochner and Reilly formulas (see e.g., \cite{LX}), 
substatic geometry (see e.g., \cite{BF}, \cite{CFMR}),
and optimal transport theory (\cite{Sa2}).
We develop the theory of $1$-weighted Ricci curvature in view of extrinsic geometric analysis.
Our research object is \textit{weighted minimal hypersurface} (also called \textit{$f$-minimal hypersurface}),
which is a critical point of the weighted volume functional associated with \eqref{eq:volume}.
We conclude several geometric consequences concerning stable $f$-minimal hypersurfaces in weighted manifolds under a lower $1$-weighted Ricci curvature bound.

%%%%%%%%%%%%%%%%%%%%%%%%%%%%
\subsection{Lorentzian background}\label{subsec:Lorentzian}
We first recall the role of the $1$-weighted Ricci curvature in the literature of Lorentzian geometry.

The development of Lorentzian geometry plays an essential role in better understanding of general relativity;
especially, singularity and black hole.
One of the goals of global Lorentzian geometry is to conclude the geometric structure of Lorentzian manifolds under the \textit{Einstein equation} together with an energy condition.
On a Lorentzian manifold $(L,\mathfrak{g})$,
the Einstein equation is given by
\begin{equation*}
\Ric_{\mathfrak{g}}+\left( \Lambda-\frac{1}{2}R_{\mathfrak{g}}  \right)\mathfrak{g}=8\pi \mathsf{T},
\end{equation*}
where $\mathsf{T}$ is the stress-energy tensor,
and $\Lambda$ is the cosmological constant.
Furthermore,
energy conditions are non-negativity assumptions for the stress-energy tensor;
for instance,
the weak energy condition (WEC) says that the stress-energy tensor is non-negative for every timelike vector,
which is believed in general relativity that it holds for all physically reasonable matters.
Besides WEC,
the strong energy condition (SEC), the null energy condition (NEC), the dominant energy condition (DEC) are also well-known.
The classical structure results such as the Hawking-Penrose singularity theorem (\cite{HP}) and the Lorentzian splitting theorem (\cite{G1}, \cite{N}) are available under SEC and NEC.

A Lorentzian manifold $(L,\mathfrak{g})$ is called a \textit{static model} if it can be written in the form of
\begin{equation}\label{eq:substatic}
L=\mathbb{R}\times M,\quad \mathfrak{g}=-V^2\,dt^2+\mathsf{g}
\end{equation}
for some Riemannian manifold $(M,\mathsf{g})$ and positive function $V\in C^{\infty}(M)$.
Under the Einstein equation,
a static model $(L,\mathfrak{g})$ in the form of \eqref{eq:substatic} satisfies NEC (i.e., the stress-energy tensor is non-negative for every null vector) if and only if a triple $(M,\mathsf{g},V)$ satisfies the following \textit{substatic condition} (see \cite[Lemma 3.8]{WWZ}, \cite[Appendix A.1]{BF}):
\begin{equation}\label{eq:substatic2}
V \Ric_{\mathsf{g}}-\Hess_{\mathsf{g}} V+(\Delta_\mathsf{g} V) \mathsf{g}\geq 0.
\end{equation}
The timeslice of the de Sitter-Schwarzschild and Reissner-Nordstr\"{o}m space-times are known to be regarded as substatic triples (\cite[Proposition 2.1 and Section 5]{B}).
Borghini-Fogagnolo \cite{BF} have pointed out that
the following are equivalent (see \cite[Appendix A.2]{BF}):
\begin{itemize}\setlength{\itemsep}{+1.0mm}
\item A triple $(M,\mathsf{g},V)$ is substatic;
\item a weighted manifold $(M,g,f)$ given by
\begin{equation*}
f:=-(n-1)\log V,\quad g:=\e^{\frac{2f}{n-1}}\mathsf{g}
\end{equation*}
has non-negative $1$-weighted Ricci curvature.
\end{itemize}
Hence,
the study of weighted manifolds of non-negative $1$-weighted Ricci curvature is equivalent to that of substatic triples via conformal change of metric.

%%%%%%%%%%%%%%%%%%%%%%%%%%%%
\subsection{Lorentzian extrinsic geometric analysis}\label{sec:substatic_weighted}
Next,
we briefly review the extrinsic geometric analysis in the literature of Lorentzian geometry.

The \textit{apparent horizon} in general relativity can be formulated in terms of the \textit{marginally outer trapped surface} (MOTS).
Let $(L,\mathfrak{g})$ be an $(n+1)$-dimensional time-oriented Lorentzian manifold,
and let $M$ be a spacelike hypersurface in $L$.
A hypersurface $\Sigma$ in $M$ is called MOTS if the null mean curvature with respect to the outer null normal field vanishes (see e.g., \cite{G2} for the precise definition).
If $M$ is totally geodesic in $L$,
then an MOTS is nothing but a minimal hypersurface in $M$.
Andersson-Mars-Simon \cite{AMS1}, \cite{AMS2} have introduced the notion of the stability for MOTS,
which is generalization of that for minimal hypersurface.
Several geometric properties of stable MOTS have been examined under DEC (see e.g., \cite{G2} and references therein, and also recent works \cite{EGM}, \cite{GM} for \textit{initial data sets}).

For a hypersurface $\Sigma$ in a Riemannian manifold $(M,\mathsf{g})$,
and a function $f\in C^{\infty}(M)$,
the following are equivalent (see e.g., \cite[Proposition 14]{CMZ2}, and also Remark \ref{rem:conformal} below):
\begin{itemize}\setlength{\itemsep}{+1.0mm}
\item\label{item:Main1} $\Sigma$ is stable minimal in $(M,\mathsf{g})$;
\item\label{item:Main2} $\Sigma$ is stable $f$-minimal in a weighted manifold $(M,g,f)$ with $g=\e^{\frac{2f}{n-1}}\mathsf{g}$.
\end{itemize}
Therefore,
the study of stable $f$-minimal hypersurfaces in weighted manifolds of non-negative $1$-weighted Ricci curvature corresponds to that of stable minimal hypersurfaces in substatic triples via conformal change of metric,
and it would be expected to stimulate the progress of the theory of stable MOTS (see also \cite{CFMR}, \cite{F}).

%%%%%%%%%%%%%%%%%%%%%%%%%%%%
\subsection{Aims}
The present paper is devoted to the study of geometric properties concerning stable $f$-minimal hypersurfaces in weighted manifolds under a lower $1$-weighted Ricci curvature bound.
Such properties have been already examined under a curvature condition
\begin{equation}\label{eq:infty}
\Ric^\infty_f\geq Kg
\end{equation}
for $K\in \mathbb{R}$ (see \cite{BSW}, \cite{CMZ1}, \cite{CMZ2}, \cite{IR1}, \cite{IR2}, \cite{IRS}, \cite{LW}, \cite{Liu2} and references therein).
The study under this setting may be contributed to that of the so-called \textit{self-shrinker},
which is a self-similar solution to the mean curvature flow,
and also can be viewed as an $f$-minimal hypersurface in the Gaussian space.
We will study our object while keeping a curvature condition
\begin{equation}\label{eq:WY}
\Ric^{1}_{f}\geq (n-1)\, \kappa\, \e^{-\frac{4f}{n-1}}\,g
\end{equation}
for $\kappa \in \mathbb{R}$ in mind,
which was introduced by Wylie \cite{W2}, Wylie-Yeroshkin \cite{WY} from the viewpoint of the study of affine connections (see also the works by Kuwae-Li \cite{KL} and Lu-Minguzzi-Ohta \cite{LMO} for the interpolation between the curvature-dimension condition and \eqref{eq:WY}).
They developed comparison geometry under the curvature bound \eqref{eq:WY},
and $f$-minimal hypersurfaces could be found in model spaces (see Remarks \ref{rem:Busemann} and \ref{rem:positive_model} below).
Notice that
the curvature bound \eqref{eq:WY} with $\kappa=0$ is weaker than the classical one \eqref{eq:infty} with $K=0$ by the monotonicity \eqref{eq:monotonicity}.
We extend several previous results under the curvature bound \eqref{eq:infty} to our setting.
As explained in the previous subsection,
our results may be contributed to Lorentzian extrinsic geometric analysis.

The present paper is organized as follows:
In Section \ref{sec:Preliminaries},
we review basics of stability of $f$-minimal hypersurfaces (see Subsection \ref{subsec:stability}).
We also present a key ingredient of the proof concerning the revision of stability inequality in terms of the $1$-weighted Ricci curvature (see Lemma \ref{lem:key}).
We further collect comparison geometric results under the curvature bound \eqref{eq:WY} (see Subsection \ref{subsec:comparison}).
In Section \ref{sec:splitting},
we first study a Schoen-Yau type criterion for stable $f$-minimal hypersurfaces in weighted manifolds of non-negative $1$-weighted Ricci curvature (see Subsection \ref{subsec:SY}).
Having it at hand,
we investigate the structure of three-dimensional weighted manifolds of non-negative $1$-weighted Ricci curvature (see Theorem \ref{thm:Liu}).
In Section \ref{sec:compactness},
we provide non-existence results for stable $f$-minimal hypersurfaces in weighted manifolds under the positivity of $1$-weighted Ricci curvature (see Subsection \ref{sec:volume}).
We apply them to conclude a smooth compactness theorem (see Theorem \ref{thm:LW}).

%%%%%%%%%%%%%%%%%%%%%%%%%%%%
%%%%%%%%%%%%%%%%%%%%%%%%%%%%
%%%%%%%%%%%%%%%%%%%%%%%%%%%%
\section{Preliminaries}\label{sec:Preliminaries}

%%%%%%%%%%%%%%%%%%%%%%%%%%%%%%%%%%%%%%%%%
%%%%%%%%%%%%%%%%%%%%%%%%%%%%%%%%%%%%%%%%%
%%%%%%%%%%%%%%%%%%%%%%%%%%%%%%%%%%%%%%%%%
\subsection{Stability of weighted minimal hypersurfaces}\label{subsec:stability}
We first recall the formulation of weighted minimal hypersurface,
and its stability.
Let $(M,g,f)$ be an $n$-dimensional weighted manifold.
The \textit{weighted Laplacian} is defined by
\begin{equation*}
    \Delta_f := \Delta - \langle \nabla f, \nabla \cdot\rangle,
\end{equation*}
which is associated with the weighted measure defined as \eqref{eq:volume}.

Let $\Sigma$ be an immersed hypersurface in $M$.
The second fundamental form $\second_{\Sigma}$,
the mean curvature vector $\mathbf{H}_{\Sigma}$ and the weighted mean curvature vector $\mathbf{H}_{f,\Sigma}$ are defined as 
\begin{equation*}
\second_{\Sigma}(X,Y) := (\nabla_XY)^{\perp}, \quad \mathbf{H}_{\Sigma} := \tr\, \second_{\Sigma},\quad \mathbf{H}_{f,\Sigma} := \mathbf{H}_{\Sigma} +(\nabla f)^{\perp}.
\end{equation*}
An immersed hypersurface $\Sigma$ is said to be \textit{$f$-minimal} if it is a critical point of the weighted volume functional.
By the first variation formula (see e.g., \cite[Proposition 1]{Liu2}, \cite[Proposition 2]{CMZ2}),
$\Sigma$ is $f$-minimal if and only if $\mathbf{H}_{f,\Sigma} \equiv 0$.
An immersed $f$-minimal hypersurface $\Sigma$ is called \textit{stable} if for every compactly supported normal variation of $\Sigma$,
the second derivative of the weighted volume functional at $\Sigma$ is non-negative.
Due to the second variation formula (see e.g., \cite[Proposition 1]{Liu2}, \cite[Proposition 13]{CMZ2}),
$\Sigma$ is stable if and only if the index form
\begin{equation*}
Q_{f,\Sigma}(T,T):=\int_{\Sigma}\left\{ |\nabla^{\perp}T|^2-\left(   \Ric^\infty_f(T,T)+|\second_{\Sigma}|^2  |T|^2\right)   \right\}\,\d m_{f,\Sigma}
\end{equation*}
is non-negative for every compactly supported normal vector field $T$,
where $\nabla^{\perp}$ is the connection on normal bundle.
For an immersed $f$-minimal hypersurface $\Sigma$,
the \textit{$f$-index} denoted by $\Ind_f(\Sigma)$ is defined to be the maximal dimension of the space on which the index form is negative definite.

Let $\Sigma$ be a two-sided immersed hypersurface in $M$.
We denote by $\nu$ a globally defined unit normal vector field on $\Sigma$.
The (scalar) second fundamental form $\second_{\Sigma}$,
the mean curvature $H_{\Sigma}$ and the weighted mean curvature $H_{f,\Sigma}$ are defined as 
\begin{equation*}
 \second_{\Sigma}(X,Y) := \langle \nabla_X \nu,Y \rangle, \quad H_{\Sigma} := \tr\, \second_{\Sigma},\quad H_{f,\Sigma} := H_{\Sigma} - f_\nu.
\end{equation*}
Notice that $\mathbf{H}_{f,\Sigma}=-H_{f,\Sigma}\,\nu$.
A two-sided immersed hypersurface $\Sigma$ is $f$-minimal if and only if $H_{f,\Sigma} \equiv 0$.
Further,
a two-sided immersed $f$-minimal hypersurface $\Sigma$ is stable if and only if 
\begin{equation}\label{eq:stable}
Q_{f,\Sigma}(\phi,\phi) := \int_\Sigma \phi \mathcal{L}_{f,\Sigma}\phi\ \d m_{f,\Sigma} \geq 0
\end{equation}
for every $\phi \in C^{\infty}_0(\Sigma)$,
where $\mathcal{L}_{f,\Sigma}$ is the \textit{stability operator} defined by 
\begin{equation*}
\mathcal{L}_{f,\Sigma}\, \phi := - \Delta_{f,\Sigma} \,\phi - \left( \Ric_f^\infty(\nu,\nu)+ \left|\second_\Sigma\right|^2 \right)\phi.
\end{equation*}
Note that
the stability inequality \eqref{eq:stable} can be expressed by
\begin{equation*}\label{eq:stable}
\int_\Sigma \left( \Ric_f^\infty(\nu,\nu)+ \left|\second_\Sigma\right|^2 \right)\phi^2\,\d m_{f,\Sigma}\leq \int_\Sigma \,|\nabla_\Sigma \phi|^2 \,\d m_{f,\Sigma}.
\end{equation*}

We now present a key ingredient of our proof concerning the revision of stability operator.
\begin{lem}\label{lem:key}
Let $\Sigma$ be a two-sided immersed $f$-minimal hypersurface.
Then we have
\begin{equation*}
\Ric_f^\infty(\nu,\nu) + \left| \second_\Sigma \right|^2 = \Ric_f^1(\nu,\nu) + \left| \second_\Sigma - \frac{f_\nu}{n-1}g_\Sigma \right|^2.
\end{equation*}
In particular, the stability inequality can be expressed as
\begin{equation}\label{eq:key}
\int_\Sigma \left( \Ric_f^1(\nu,\nu) + \left| \second_\Sigma - \frac{f_\nu}{n-1}g_\Sigma \right|^2 \right)\phi^2\,\d m_{f,\Sigma}\leq \int_\Sigma \,|\nabla_\Sigma \phi|^2 \,\d m_{f,\Sigma}.
\end{equation}
\end{lem}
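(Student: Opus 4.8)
The plan is to establish the pointwise identity by a direct algebraic computation and then read off \eqref{eq:key} by substituting into the stability inequality. The only two ingredients are the definitions of the weighted Ricci curvatures and the $f$-minimality hypothesis $H_{f,\Sigma}\equiv 0$, so the argument is essentially bookkeeping; the one subtlety is knowing where $f$-minimality enters.

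First I would record the difference between the two curvature terms. Starting from $\Ric_f^N = \Ric + \Hess f - (df\otimes df)/(N-n)$ and evaluating at $N=\infty$ and $N=1$ applied to $(\nu,\nu)$, one finds
\begin{equation*}
\Ric_f^\infty(\nu,\nu) - \Ric_f^1(\nu,\nu) = -\frac{f_\nu^2}{n-1}.
\end{equation*}
Next I would expand the squared norm on the right-hand side. Working in a local orthonormal frame for $\Sigma$, the two elementary facts are $\langle \second_\Sigma, g_\Sigma\rangle = \tr\,\second_\Sigma = H_\Sigma$ and $|g_\Sigma|^2 = n-1$, the dimension of $\Sigma$. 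This gives
\begin{equation*}
\left| \second_\Sigma - \frac{f_\nu}{n-1}g_\Sigma \right|^2 = |\second_\Sigma|^2 - \frac{2 f_\nu}{n-1}\, H_\Sigma + \frac{f_\nu^2}{n-1}.
\end{equation*}

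Finally I would invoke $f$-minimality. Since $H_{f,\Sigma} = H_\Sigma - f_\nu = 0$, we have $H_\Sigma = f_\nu$, and substituting collapses the cross term together with the last term, yielding
\begin{equation*}
\left| \second_\Sigma - \frac{f_\nu}{n-1}g_\Sigma \right|^2 = |\second_\Sigma|^2 - \frac{f_\nu^2}{n-1}.
\end{equation*}
Adding this to the curvature difference above produces the claimed identity, after which \eqref{eq:key} is immediate from the stability inequality. I do not expect a genuine obstacle, as the statement is a computational identity rather than an inequality requiring estimation; the only point demanding care is the consistent use of $H_\Sigma = f_\nu$, which is precisely the step that transforms the $N=\infty$ integrand into its $N=1$ counterpart and thus makes the lower bound \eqref{eq:WY} usable in the subsequent arguments.
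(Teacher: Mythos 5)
Your proposal is correct and follows essentially the same route as the paper: expand the squared norm, use $H_\Sigma = f_\nu$ from $f$-minimality to reduce it to $|\second_\Sigma|^2 - f_\nu^2/(n-1)$, and absorb the $f_\nu^2/(n-1)$ term into the passage from $\Ric_f^\infty$ to $\Ric_f^1$. The only difference is cosmetic — you isolate the curvature difference first, while the paper substitutes directly — and your identification of $f$-minimality as the one essential input matches the paper's proof.
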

\begin{proof}
Since $\Sigma$ is $f$-minimal, 
we possess $H_{\Sigma} = f_\nu$,
and hence
\begin{equation*}
 \left| \second_\Sigma - \frac{f_\nu}{n-1}g_{\Sigma} \right|^2= |\second_\Sigma|^2 - \frac{2 f_\nu H_\Sigma}{n-1} + \frac{f_\nu^2}{n-1}= |\second_\Sigma|^2 - \frac{f_\nu^2}{n-1}.
\end{equation*}
It follows that
\begin{align*}
    \Ric_f^{\infty}(\nu,\nu) + |\second_\Sigma|^2
    &= \Ric_f^\infty(\nu,\nu) + \frac{f_\nu^2}{n-1} + \left| \second_\Sigma  -\frac{f_\nu}{n-1} g_\Sigma\right|^2\\
    &= \Ric_f^1 (\nu,\nu)+ \left| \second_\Sigma  -\frac{f_\nu}{n-1}g_{\Sigma} \right|^2.
\end{align*}
This completes the proof.
\end{proof}

\begin{rem}
This observation also can be found in recent works by Colombo-Freitas-Mari-Rigoli \cite{CFMR} and the first named author \cite{F} (see \cite[Section 3]{CFMR} and \cite[Proposition 2.3]{F}).
\end{rem}

Let $\Sigma$ be a complete two-sided immersed $f$-minimal hypersurface.
For a relatively compact domain $\Omega$ in $\Sigma$,
we denote by $\Ind_f(\Omega)$ the number of negative eigenvalues of the operator $\mathcal{L}_{f,\Sigma}$ on $C_0^\infty(\Omega)$.
The $f$-index of $\Sigma$ is given by
\begin{equation*}
    \Ind_f(\Sigma)= \sup_{\Omega \subset \Sigma} \Ind_f(\Omega),
\end{equation*}
where the supremum is taken over all relatively compact domains $\Omega$ in $\Sigma$.

\begin{rem}\label{rem:conformal}
We here give a remark on the behavior of the above notions under the conformal change of metric.
Let $(M,\mathsf{g})$ be a Riemannian manifold,
and let $f\in C^{\infty}(M)$.
We consider a conformally changed metric $g=\e^{\frac{2f}{n-1}}\mathsf{g}$,
and an associated weighted manifold $(M,g,f)$.
Let $\Sigma$ stand for a two-sided immersed hypersurface in $M$.
By direct calculations,
we see
\begin{equation*}
\second_{\g,\Sigma} = \e^{-\frac{f}{n-1}} \left( \second_\Sigma- \frac{f_\nu}{n-1}g_{\Sigma} \right),\quad H_{\g,\Sigma} = \e^{\frac{f}{n-1}} H_{f,\Sigma}.
\end{equation*}
In particular,
$\Sigma$ is totally geodesic in $(M,\g)$ if and only if it satisfies
\begin{equation}\label{eq:second}
\second_\Sigma \equiv  \frac{f_\nu}{n-1}g_{\Sigma}
\end{equation}
in $(M,g,f)$.
Also,
$\Sigma$ is minimal in $(M,\g)$ if and only if it is $f$-minimal in $(M,g,f)$.
Moreover,
in this case,
we possess the following (see e.g., \cite[Appendix]{CMZ2}):
\begin{equation*}
    V\,\mathcal{L}_{\g,\Sigma}\left( V\phi \right) =\mathcal{L}_{f,\Sigma}(\phi),\quad Q_{\g,\Sigma}\left(V\phi,V\phi\right) = Q_{f,\Sigma}(\phi,\phi),\quad \Ind_{\g}(\Sigma) = \Ind_f(\Sigma);
\end{equation*}
in particular,
$\Sigma$ is stable minimal hypersurface in $(M,\g)$ if and only if it is stable $f$-minimal hypersurface in $(M,g,f)$.

Notice that
if we consider a substatic triple $(M,\mathsf{g},V)$ defined as \eqref{eq:substatic2},
then we will choose $f=-(n-1)\log V$,
and in this case,
we possess the following (see \cite[Appendix A.2]{BF}):
\begin{equation}\label{eq:BF_relation}
\Ric^1_f=\Ric_{\mathsf{g}}-\frac{\Hess_{\mathsf{g}} V}{V}+\frac{\Delta_\mathsf{g} V}{V} \mathsf{g}.
\end{equation}
In particular,
one can verify the equivalence stated in Subsection \ref{subsec:Lorentzian}.
More generally,
for $\kappa \in \mathbb{R}$,
the following are equivalent:
\begin{itemize}\setlength{\itemsep}{+1.0mm}
\item $(M,\mathsf{g},V)$ satisfies a curvature condition
\begin{equation}\label{eq:substatic_WY}
V\Ric_{\mathsf{g}} - \Hess_{\mathsf{g}} V + (\Delta_{\mathsf{g}} V)\mathsf{g}\geq (n-1)\kappa V^3\,\mathsf{g};
\end{equation}
\item $(M,g,f)$ satisfies the curvature condition \eqref{eq:WY}.
\end{itemize}
The authors do not know whether the curvature condition \eqref{eq:substatic_WY} for $\kappa \neq 0$ is reasonable in the context of the substatic geometry.
One can find a curvature condition
\begin{equation*}\label{eq:Miao-Tam}
V\Ric_{\g} - \Hess_{\g} V + (\Delta_{\g} V)\g= K\,\g
\end{equation*}
for $K\in \mathbb{R}$ in the literature of finding stationary points for the volume functional on the space of constant scalar curvature metrics (see \cite{MT1}, \cite{MT2}, \cite{CEM}).
From this point of view,
the natural curvature condition might be the following (see also \cite[Corollary 1.5]{LX}):
\begin{equation}\label{eq:Miao-Tam}
V\Ric_{\g} - \Hess_{\g} V + (\Delta_{\g} V)\g\geq K\,\g.
\end{equation}
Borghini-Fogagnolo \cite{BF} have also mentioned the validity of \eqref{eq:Miao-Tam} in view of the so-called \textit{Besse conjecture}.
Notice that the curvature condition \eqref{eq:Miao-Tam} on $(M,\g,V)$ is equivalent to
\begin{equation*}
\Ric^1_f\geq K\, \e^{-\frac{f}{n-1}}\,g
\end{equation*}
on $(M,g,f)$ via the relation \eqref{eq:BF_relation}.
\end{rem}

%%%%%%%%%%%%%%%%%%%%%%%%%%%%%%%%%%%%%%%%%
\subsection{Comparison geometry}\label{subsec:comparison}
We next recall the comparison geometry under the curvature bound \eqref{eq:WY} established by Wylie \cite{W2} and Wylie-Yeroshkin \cite{WY} (see also \cite{KS}).

Wylie \cite{W2} has established the following splitting theorem of Cheeger-Gromoll type under the non-negativity of $1$-weighted Ricci curvature (see \cite[Theorem 1.2]{W2}):
\begin{thm}[\cite{W2}]\label{thm:splitting-wylie}
Let $(M,g,f)$ be a complete weighted manifold.
We assume that $\Ric_f^1 \geq 0$ and $f$ is bounded from above.
If $M$ contains a line,
then $(M,g)$ is isometric to a warped product over $\mathbb{R}$.
\end{thm}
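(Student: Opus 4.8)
The plan is to follow the Cheeger--Gromoll strategy, carried out in the reparametrized geometry of Wylie--Yeroshkin that is adapted to the $1$-weighted Ricci curvature. The crucial preliminary step is to replace the Riemannian arclength along a unit-speed geodesic $\gamma$ by the affine parameter
\[
t(s) := \int_0^s \e^{-\frac{2f(\gamma(u))}{n-1}}\,\d u .
\]
Since $f$ is bounded from above, the integrand is bounded below by a positive constant, so each ray has infinite reparametrized length; this is exactly where the hypothesis on $f$ enters, guaranteeing that the two rays $\gamma^{\pm}$ obtained by splitting the line at $\gamma(0)$ remain genuine rays in the reparametrized sense. The exponent is the correct one because $(\d t/\d s)^2=\e^{-\frac{4f}{n-1}}$ matches the conformal factor appearing in the curvature bound \eqref{eq:WY}.

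Next I would construct the weighted Busemann functions $b^{\pm}$ associated with $\gamma^{\pm}$ using this reparametrized distance, and record the comparison that $\Ric_f^1\geq 0$ provides. Along a minimizing geodesic the reparametrized mean curvature of the level sets obeys a Riccati inequality driven by $\Ric_f^1$ read in the variable $t$; integrating it yields $L\,b^{\pm}\leq 0$ in the barrier (support) sense, where $L$ is the elliptic operator obtained by expressing the weighted Laplacian $\Delta_f$ in the affine parameter. The function $b:=b^{+}+b^{-}$ is then nonnegative by the triangle inequality, vanishes identically along $\gamma$, and satisfies $Lb\leq 0$, so the strong maximum principle forces $b\equiv 0$. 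Consequently $b^{+}=-b^{-}$ is smooth and $L$-harmonic, and the one-sided comparisons become equalities.

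Finally I would extract the geometry from the equality case. Saturating the Riccati comparison pins down the reparametrized second fundamental form of the level sets $\{b^{+}=\mathrm{const}\}$, and the integral curves of $\nabla b^{+}$, which are Riemannian geodesics up to the reparametrization, furnish the $\IR$-factor. Tracking how $f$ and the induced metric evolve along these curves shows that $f$ and the profile function become functions of the $\IR$-parameter alone and identifies the splitting as a \emph{warped} product over $\IR$; the term $\frac{1}{n-1}\,\d f\otimes \d f$ in $\Ric_f^1$ is precisely what turns the classical isometric splitting into a warped one.

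I expect the two genuinely substantive steps to be the comparison estimate of the second paragraph and the rigidity analysis of the third: the former requires the correct Riccati/Bochner computation for $\Ric_f^1$ in the reparametrized parameter, rather than the $N=\infty$ computation underlying the classical condition \eqref{eq:infty}, and the latter requires converting the pointwise equality into the global warped-product normal form, including verifying the regularity of $b^{+}$ and the smoothness of the fiber. These are the places where the Wylie--Yeroshkin comparison geometry under \eqref{eq:WY} must be invoked in full.
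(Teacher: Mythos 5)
The paper does not prove this statement itself but quotes it from Wylie \cite{W2} (his Theorem 1.2), and your outline is essentially a faithful reconstruction of that proof: reparametrize geodesics by $\int e^{-2f/(n-1)}$ (where the upper bound on $f$ guarantees infinite reparametrized length of rays), establish the Riccati/Laplacian comparison for the reparametrized Busemann functions under $\Ric_f^1\geq 0$, apply the strong maximum principle to $b^++b^-$, and extract the warped-product normal form from the equality case. This is the same approach as the cited source; the only detail a full write-up must treat carefully is the triangle-inequality step for the reparametrized excess, since the reparametrized length is the length in the conformal metric $e^{-4f/(n-1)}g$ whose minimizers need not be $g$-geodesics, a point handled explicitly in \cite{W2}.
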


\begin{rem}\label{rem:Busemann}
In the splitting case in Theorem \ref{thm:splitting-wylie},
one can observe that
each slice (i.e., each level set of the Busemann function) must be an $f$-minimal hypersurface since the Busemann function is $f$-harmonic and its norm of the gradient is identically one from its proof (see also the example in \cite[Corollary 2.4]{W2}).
\end{rem}

Wylie-Yeroshkin \cite{WY} have developed Laplacian and volume comparison under the curvature bound \eqref{eq:WY} (see \cite[Section 4]{WY}).
They have derived the following:
\begin{prop}[\cite{WY}]\label{prop:volume-comparison}
Let $(M,g,f)$ be an $n$-dimensional complete weighted manifold.
We assume that $\Ric_f^1 \geq 0$ and $f$ is bounded from below.
Let $o\in M$.
Then for all $r>0$ we have
\begin{equation*}
m_{f,\partial B_r(o)}(\partial B_r(o))\leq \omega_{n-1}\,\e^{f(o)-2\inf f}\,r^{n-1},
\end{equation*}
where $\omega_{n-1}$ is the volume of the $(n-1)$-dimensional unit sphere $\mathbb{S}^{n-1}$.
\end{prop}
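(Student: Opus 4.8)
The plan is to prove the estimate pointwise along each minimizing geodesic from $o$ and then integrate over directions. Fix $o$ and write the weighted measure in geodesic polar coordinates as $\d m_f = \mathcal{A}_f(r,\theta)\,\d r\,\d\theta$, where $\mathcal{A}_f(r,\theta) = \e^{-f}\,\mathcal{A}(r,\theta)$ and $\mathcal{A}$ is the Riemannian area element of the geodesic sphere; then $\mathcal{A}_f(r,\theta)$ is precisely the weighted area element of $\partial B_r(o)$ for $r$ smaller than the distance to the cut locus in the direction $\theta$. Setting $h := \partial_r \log \mathcal{A}_f = \Delta_f r$ (the weighted mean curvature of the geodesic sphere along $\partial_r$), the whole statement reduces to the pointwise bound $\mathcal{A}_f(r,\theta) \leq \e^{f(o)-2\inf f}\,r^{n-1}$, after which integrating over $\theta\in\IS^{n-1}$ and restricting to the segment domain yields the assertion.

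First I would derive the governing differential inequality for $h$ along a unit-speed radial geodesic $\gamma$. Combining the Riccati equation $\partial_r(\Delta r) \le -\tfrac{(\Delta r)^2}{n-1} - \Ric(\partial_r,\partial_r)$ with the relations $\Delta r = h + f_r$ and $\Ric_f^\infty(\partial_r,\partial_r) = \Ric_f^1(\partial_r,\partial_r) - \tfrac{f_r^2}{n-1}$ (the latter from the very definition of $\Ric_f^1$ with $N=1$), a short computation gives
\[
h' \leq -\frac{h^2 + 2 f_r h}{n-1} - \Ric_f^1(\partial_r,\partial_r) \leq -\frac{h^2 + 2 f_r h}{n-1},
\]
where the last inequality uses the hypothesis $\Ric_f^1 \geq 0$.

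The key step is the Wylie--Yeroshkin reparametrization, which I would implement through the substitution $\phi := \mathcal{A}_f^{1/(n-1)}$, so that $h = (n-1)\,\phi'/\phi$. The Riccati inequality above then transforms into the linear inequality $(n-1)\phi'' + 2 f_r \phi' \leq 0$, equivalently
\[
\frac{\d}{\d r}\left( \e^{\frac{2f}{n-1}}\,\phi' \right) \leq 0,
\]
so that $\e^{2f/(n-1)}\phi'$ is non-increasing in $r$. Analyzing the Euclidean asymptotics as $r \to 0^+$ (where $\mathcal{A} \sim r^{n-1}$, hence $\phi \sim \e^{-f(o)/(n-1)} r$ and $\phi' \to \e^{-f(o)/(n-1)}$, while $f(\gamma(r)) \to f(o)$) pins down the initial value $\lim_{r\to 0^+}\e^{2f/(n-1)}\phi' = \e^{f(o)/(n-1)}$. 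Monotonicity then gives $\phi'(r) \leq \e^{f(o)/(n-1)}\,\e^{-2f(\gamma(r))/(n-1)}$, and since $f \geq \inf f$ we obtain $\phi'(r) \leq \e^{(f(o)-2\inf f)/(n-1)}$. Integrating from $0$ with $\phi(0)=0$ yields $\phi(r) \leq \e^{(f(o)-2\inf f)/(n-1)}\, r$, which is exactly the desired pointwise bound on $\mathcal{A}_f = \phi^{n-1}$.

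Finally I would integrate this estimate over $\theta \in \IS^{n-1}$: since $\mathcal{A}_f(r,\theta)$ contributes to $m_{f,\partial B_r(o)}(\partial B_r(o))$ only for $r$ below the cut distance, the total weighted area is at most $\omega_{n-1}\,\e^{f(o)-2\inf f}\,r^{n-1}$. I expect the main obstacle to be the rigorous handling of the cut locus: the smooth Riccati argument is valid only inside the segment domain, so the integration must be arranged so that $\mathcal{A}_f$ is used only where $r$ is smaller than the cut distance, the omitted contribution beyond the cut locus merely decreasing the integral. A secondary delicate point is the $r\to 0$ expansion that fixes the constant $\e^{f(o)/(n-1)}$, since it is precisely this initial value that produces the boundary factor $\e^{f(o)}$ in the final estimate.
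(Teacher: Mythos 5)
Your argument is correct and is essentially the Wylie--Yeroshkin proof that the paper simply cites (the paper gives no proof of its own, referring to \cite{WY}, Section 4): your substitution $\phi=\mathcal{A}_f^{1/(n-1)}$ together with the integrating factor $\e^{2f/(n-1)}$ is exactly their reparametrized-distance comparison, since $\e^{2f/(n-1)}\phi'=\d\phi/\d s$ for $s(r)=\int_0^r \e^{-2f/(n-1)}\,\d t$, and the concavity of $\phi$ in $s$ plus the boundedness of $f$ from below yields precisely the stated constant $\e^{f(o)-2\inf f}$. The Riccati computation, the initial asymptotics, and the cut-locus handling are all standard and handled correctly.
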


We next discuss results under the positivity of $1$-weighted Ricci curvature.
Let $(M,g,f)$ be an $n$-dimensional complete weighted manifold.
Wylie-Yeroshkin \cite{WY} have introduced the \textit{reparametrized distance} $d_{f}:M\times M \to \mathbb{R}$ defined by
\begin{equation*}
d_{f}(x,y):=\inf_{\gamma} \int^{d(x,y)}_{0}\,\e^{-\frac{2f(\gamma(\xi))}{n-1}}\,\d\xi,
\end{equation*}
where the infimum is taken over all unit speed minimal geodesics $\gamma:[0,d(x,y)]\to M$ from $x$ to $y$.
They obtained the following estimate of Bonnet-Myers type (see \cite[Theorem 2.2]{WY}):
\begin{prop}[\cite{WY}]\label{prop:BM}
Let $(M,g,f)$ be an $n$-dimensional complete weighted manifold. 
For $\kappa > 0$,
we assume $\Ric_f^1 \geq (n-1)\,\kappa \,\e^{-\frac{4f}{n-1}}g$.
Then we have
\begin{equation*}
\sup_{x,y\in M} d_f(x,y)\leq \frac{\pi}{\sqrt{\kappa}}.
\end{equation*}
In particular,
if $f$ is bounded from above,
then $M$ must be compact.
\end{prop}

\begin{rem}\label{rem:closed}
In Subsection \ref{subsec:compactness} below,
we will focus on a closed weighted manifold $(M,g,f)$ of positive $1$-weighted Ricci curvature.
Then it satisfies the curvature bound \eqref{eq:WY} for some $\kappa>0$,
and the curvature bound can be lifted over the universal cover.
Since the lift of $f$ is also bounded from above,
Proposition \ref{prop:BM} says that the universal cover must be compact;
in particular,
the fundamental group $\pi_1(M)$ must be finite.
Notice that
the finiteness of the fundamental group can be available under a weaker setting that $(M,g,f)$ is complete and satisfies the curvature bound \eqref{eq:WY} for $\kappa>0$ (see \cite[Theorem 2.9]{WY}).
\end{rem}

\begin{rem}\label{rem:positive_model}
Under the same setting as in Proposition \ref{prop:BM},
Wylie-Yeroshkin \cite{WY} have produced a rigidity theorem of Cheng type concerning the equality case by setting a model space as a warped product of the following form (see \cite[Theorem 2.6]{WY}):
\begin{equation*}
\left([0,D],dt^2+\exp\left( 2\frac{f(t)+f(0)}{n-1} \right)\,\frac{\sin^2\left(\sqrt{\kappa}\,s_{f}(t)\right)}{\kappa}g_{\mathbb{S}^{n-1}}\right)
\end{equation*}
with
\begin{equation*}
s_f(t):=\int^t_0\,\e^{-\frac{2f(\xi)}{n-1}}\,\d\xi,\quad s_f(D)=\frac{\pi}{\sqrt{\kappa}}.
\end{equation*}
On this space,
the slice at $t_0$ determined by $s_f(t_0)=\pi/(2\sqrt{\kappa})$ is $f$-minimal.
\end{rem}

We close this subsection with the following Frankel property (see \cite[Proposition 3.6]{FS}):
\begin{prop}[\cite{FS}]\label{prop:Frankel}
Let $(M,g,f)$ be a complete weighted manifold of positive $1$-weighted Ricci curvature.
Let $\Sigma_1$ and $\Sigma_2$ be two complete properly immersed $f$-minimal hypersurfaces in $M$ such that at least one of them is compact.
Then $\Sigma_1$ and $\Sigma_2$ must intersect.
\end{prop}

%%%%%%%%%%%%%%%%%%%%%%%%%%%%%%%%%%%%%%%%%
%%%%%%%%%%%%%%%%%%%%%%%%%%%%%%%%%%%%%%%%%
%%%%%%%%%%%%%%%%%%%%%%%%%%%%%%%%%%%%%%%%%
\section{Schoen-Yau type criterion and its application to three-manifolds}\label{sec:splitting}

%%%%%%%%%%%%%%%%%%%%%%%%%%%%%%%%%%%%%%%%%%%%%%%%%%%%%%
%%%%%%%%%%%%%%%%%%%%%%%%%%%%%%%%%%%%%%%%%%%%%%%%%%%%%%
\subsection{Schoen-Yau type criterion}\label{subsec:SY}

In the present subsection,
we present a Schoen-Yau type criterion for stable $f$-minimal hypersurfaces in weighted manifolds of non-negative $1$-weighted Ricci curvature.
Such a result has been obtained by Liu \cite{Liu2} under the non-negativity of $\infty$-weighted Ricci curvature (see \cite[Sections 3, 5]{Liu2}).
We generalize it under a weak assumption.

We first show the following (cf. \cite[Theorem 1 and Proposition 2]{Liu2}):
\begin{lem}\label{lem:SY_quadratic}
Let $(M,g,f)$ be an $n$-dimensional complete orientable weighted manifold.
Let $\Sigma$ be a complete orientable immersed stable $f$-minimal hypersurface in $M$.
We assume that 
$\Ric_f^1 \geq 0$ along $\Sigma$.
Let $o\in \Sigma$.
We also assume that
for every $r > 0$ we have
\begin{equation*}
m_{f,\Sigma}(B_r^{\Sigma}(o)) \leq C r^2
\end{equation*}
for some $C > 0$,
where $B^\Sigma_r(o)$ is the intrinsic open geodesic ball in $\Sigma$ of radius $r$ centered at $o$.
Then we have 
\begin{equation}\label{eq:SY_second}
 \second_\Sigma \equiv \frac{f_\nu}{n-1}g_{\Sigma}, \quad \Ric_f^1 (\nu,\nu) \equiv 0.
\end{equation}
\end{lem}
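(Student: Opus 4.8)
The plan is to feed logarithmic cutoff functions into the revised stability inequality \eqref{eq:key} of Lemma \ref{lem:key}, using the quadratic volume growth to force the Dirichlet energy to vanish in the limit. Write $\rho := d_{\Sigma}(o,\cdot)$ for the intrinsic distance from $o$, and abbreviate the integrand by
\[
W := \Ric_f^1(\nu,\nu) + \left| \second_\Sigma - \frac{f_\nu}{n-1}g_{\Sigma} \right|^2,
\]
which is nonnegative because $\Ric_f^1 \geq 0$ along $\Sigma$ and the remaining term is a square. Since \eqref{eq:key} extends from $C_0^\infty(\Sigma)$ to compactly supported Lipschitz functions by a routine approximation, it suffices to exhibit Lipschitz cutoffs $\phi_R$ that are identically $1$ on the ball $B_R^{\Sigma}(o)$ and whose gradient energy tends to $0$.

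First I would fix $R > 1$ and set
\[
\phi_R := \begin{cases} 1 & \rho \leq R, \\ 2 - \dfrac{\log \rho}{\log R} & R < \rho \leq R^2, \\ 0 & \rho > R^2. \end{cases}
\]
Completeness of $\Sigma$ guarantees that this is a compactly supported Lipschitz function, with $|\nabla_\Sigma \phi_R| = (\rho \log R)^{-1}$ on the annulus $\{R < \rho \leq R^2\}$ and $|\nabla_\Sigma \phi_R| = 0$ elsewhere. The decisive estimate is a dyadic decomposition of this annulus into $A_k := \{2^k R < \rho \leq 2^{k+1}R\}$ for $0 \leq k < K$, where $2^K = R$: on $A_k$ one has $\rho^{-2} \leq (2^k R)^{-2}$, while the volume hypothesis gives $m_{f,\Sigma}(A_k) \leq m_{f,\Sigma}(B_{2^{k+1}R}^{\Sigma}(o)) \leq 4C(2^k R)^2$, so each annular contribution is at most $4C$. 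Summing the $K = \log_2 R$ pieces and dividing by $(\log R)^2$ yields
\[
\int_\Sigma |\nabla_\Sigma \phi_R|^2 \,\d m_{f,\Sigma} \leq \frac{4C \log_2 R}{(\log R)^2} = \frac{4C}{(\log 2)\,\log R},
\]
which tends to $0$ as $R \to \infty$.

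Finally, since $W \geq 0$ and $\phi_R \equiv 1$ on $B_R^{\Sigma}(o)$, inequality \eqref{eq:key} gives
\[
\int_{B_R^{\Sigma}(o)} W \,\d m_{f,\Sigma} \leq \int_\Sigma W\,\phi_R^2 \,\d m_{f,\Sigma} \leq \int_\Sigma |\nabla_\Sigma \phi_R|^2 \,\d m_{f,\Sigma} \longrightarrow 0.
\]
Letting $R \to \infty$ forces $\int_\Sigma W \,\d m_{f,\Sigma} = 0$, and as $W$ is continuous and nonnegative it must vanish identically; since the two summands of $W$ are separately nonnegative, this is precisely the conclusion \eqref{eq:SY_second}. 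I expect the only delicate point to be the threshold nature of the logarithmic cutoff: the dyadic estimate succeeds exactly because the exponent $2$ in the bound $Cr^2$ matches the quadratic scaling, so there is no room to spare, and I would also take care that the smoothing of $\phi_R$ preserves \eqref{eq:key}.
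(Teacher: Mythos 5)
Your proposal is correct and follows essentially the same route as the paper: the paper also applies the logarithmic cutoff trick to the revised stability inequality \eqref{eq:key}, decomposing the transition annulus into geometric annuli (at scales $\e^{\ell}$ rather than your dyadic scales $2^kR$) so that the quadratic volume growth cancels the $\rho^{-2}$ decay of the gradient and leaves a total energy of order $1/\log R$. The only cosmetic difference is the parametrization of the cutoff ($R$ to $R^2$ versus $\e^{\SL}$ to $\e^{2\SL}$), and your remark about extending \eqref{eq:key} to Lipschitz test functions is the same implicit step the paper takes.
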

\begin{proof}
We adopt the logarithmic cutoff trick.
For $\SL >0$ we define
\begin{align*}
    \eta_{\SL}(t) :=
    \begin{cases}
        1 & \mbox{ for } t\in [0,\e^{\SL}],\\
        2-\displaystyle{\frac{\log t}{\SL}} &\mbox{ for } t\in (\e^{\SL},\e^{2\SL}),\\
        0 & \mbox{ for } t\in [\e^{2\SL},+\infty),
    \end{cases}
    \qquad \phi_{\SL} := \eta_{\SL} \circ d_o^\Sigma,
\end{align*}
where $d_o^{\Sigma}$ denotes the intrinsic distance function over $\Sigma$ from $o$.
Notice that
\begin{equation*}
\sup_{B^{\Sigma}_{\e^{\ell}}(o)\setminus B^{\Sigma}_{\e^{\ell-1}}(o)}|\nabla_{\Sigma} \phi_{\SL} |^2\leq {\SL}^{-2}\,\e^{2-2\ell}
\end{equation*}
for $\ell=\SL+1,\dots,2\SL$.
From the stability inequality \eqref{eq:key},
it follows that
\begin{align*}
 0 &\leq \int_\Sigma \left( \Ric_f^1(\nu,\nu) + \left| \second_\Sigma - \frac{f_\nu}{n-1} g_{\Sigma}\right|^2 \right)\phi_{\SL}^2 \ \d m_{f,\Sigma}\\
 &\leq \int_{\Sigma} |\nabla_{\Sigma}\, \phi_{\SL}|^2 \ \d m_{f,\Sigma} \leq \sum^{2\SL}_{\ell=L+1}\left(\SL^{-2}\,\e^{2-2\ell}\,m_{f,\Sigma} (B^{\Sigma}_{\e^{\ell}}(o)) \right)\leq \frac{C\e^2}{\SL}.
\end{align*}
By letting $\SL \rightarrow + \infty$,
we arrive at the desired conclusion.
\end{proof}

\begin{rem}
By \eqref{eq:second} in Remark \ref{rem:conformal},
the conclusion for second fundamental form in \eqref{eq:SY_second} is equivalent to the condition that $\Sigma$ is totally geodesic with respect to the conformally changed metric $\mathsf{g}:=\e^{-\frac{2f}{n-1}}g$.
\end{rem}

\begin{rem}
In \cite[Theorem 1]{Liu2},
he has also shown a rigidity result for weighed manifold of non-negative $\infty$-weighted Ricci curvature under the existence of a certain compact $f$-area-minimizing hypersurface.
Similarly,
we have the following:
Let $(M,g,f)$ be an $n$-dimensional complete orientable weighted manifold.
We assume $\Ric^1_f\geq 0$.
Let $\Sigma$ be a closed orientable embedded stable $f$-minimal hypersurface in $M$.
If $\Sigma$ is $f$-area-minimizing in its homology class,
then $M$ is isometric to a quotient of warped product in the form of 
\begin{equation}\label{eq:compact}
\left(\mathbb{R}\times \Sigma,dt^2+\e^{2\frac{f(\gamma_z(t))-f(z)}{n-1}}g_{\Sigma}  \right)
\end{equation}
with $f(\gamma_z(t))=f_1(t)+f_2(z)$ for some $f_1:\mathbb{R}\to \mathbb{R}$ and $f_2:\Sigma\to \mathbb{R}$,
where $\gamma_z$ is the line with $\gamma_z(0)=z$ that is orthogonal to $\Sigma$.
The outline of the proof is as follows:
Let $\{\Sigma_t\}_{t\in (-\eps,\eps)}$ be a family of hypersurfaces in $M$ such that $\Sigma_t$ is the $t$-level set of the signed distance function from $\Sigma$ in $M$.
On the side such that $t\geq 0$,
the weighted Laplacian of the distance function from $\Sigma$ is non-positive due to the Laplacian comparison (see e.g., \cite[Theorem 5.2]{W2}, \cite[Lemmas 3.3 and 3.8]{Sa1});
in particular,
the weighted measure of $\Sigma_t$ is non-increasing in $t$.
Since $\Sigma=\Sigma_0$ is $f$-area-minimizing,
it must be constant in $t$,
and hence the equality in the Laplacian comparison holds,
and the rigidity for the equality case can be used.
By applying the same argument to the opposite side,
and by induction,
we arrive at the desired assertion thanks to \cite[Proposition 2.2]{W2}.
\end{rem}

One can conclude the following (cf. \cite[Proposition 3]{Liu2}):
\begin{prop}\label{prop:SY}
Let $(M,g,f)$ be a three-dimensional complete orientable weighted manifold.
We assume that $\Ric_f^1 \geq 0$ and $f$ is bounded from below.
Let $\Sigma$ be a complete orientable immersed $f$-area-minimizing surface in $M$.
Then we have 
\begin{equation}\label{eq:SY}
\second_\Sigma \equiv \frac{f_\nu}{2}g_{\Sigma},\quad \Ric_f^1 (\nu,\nu) \equiv 0.
\end{equation}
\end{prop}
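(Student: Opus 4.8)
The plan is to deduce the statement from Lemma \ref{lem:SY_quadratic}. An $f$-area-minimizing surface is in particular a stable $f$-minimal hypersurface, and $\Ric_f^1\geq 0$ holds along $\Sigma$ by hypothesis, so the hypotheses of Lemma \ref{lem:SY_quadratic} are met once we verify the quadratic weighted-area growth $m_{f,\Sigma}(B_r^\Sigma(o))\leq Cr^2$ for some constant $C>0$ and some base point $o\in\Sigma$. Since the intrinsic distance dominates the extrinsic one, the intrinsic ball satisfies $B_r^\Sigma(o)\subset \Sigma\cap B_r(o)$, where $B_r(o)$ is the extrinsic geodesic ball in $M$; thus it suffices to bound the weighted area of $\Sigma\cap B_r(o)$.

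To control this extrinsic quantity I would exploit the $f$-area-minimizing property together with the volume comparison of Proposition \ref{prop:volume-comparison}. For almost every $r>0$ the surface $\Sigma$ meets the geodesic sphere $\partial B_r(o)$ transversally, so that $\partial(\Sigma\cap B_r(o))$ is a (possibly disconnected) closed curve lying on $\partial B_r(o)$. Since $M$ is three-dimensional, $\partial B_r(o)$ is a topological two-sphere away from the cut locus, and this curve bounds a region $D_r\subset\partial B_r(o)$. Using $D_r$ as a competitor with the same boundary as $\Sigma\cap B_r(o)$, the minimizing property yields
\[
m_{f,\Sigma}(\Sigma\cap B_r(o))\leq m_{f,\partial B_r(o)}(D_r)\leq m_{f,\partial B_r(o)}(\partial B_r(o)).
\]
Applying Proposition \ref{prop:volume-comparison} with $n=3$, the right-hand side is at most $\omega_{2}\,\e^{f(o)-2\inf f}\,r^{2}$, where the hypothesis that $f$ is bounded from below guarantees that the constant is finite. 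This establishes the desired growth $m_{f,\Sigma}(\Sigma\cap B_r(o))\leq Cr^2$, and hence $m_{f,\Sigma}(B_r^\Sigma(o))\leq Cr^2$.

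With the quadratic area growth in hand, Lemma \ref{lem:SY_quadratic} applies directly and produces exactly the two conclusions recorded in \eqref{eq:SY}, namely $\second_\Sigma\equiv \frac{f_\nu}{2}g_{\Sigma}$ and $\Ric_f^1(\nu,\nu)\equiv 0$, upon substituting $n-1=2$.

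The main obstacle I anticipate is the rigorous justification of the sphere-comparison step. One must handle the cut locus of $o$ (so that $\partial B_r(o)$ need not be smooth), ensure that replacing $\Sigma\cap B_r(o)$ by $D_r$ is an admissible competitor in the precise sense of $f$-area-minimizing (i.e.\ the two surfaces are homologous relative to their common boundary inside $\overline{B}_r(o)$), and deal with the possibility that $\partial(\Sigma\cap B_r(o))$ is disconnected. These issues are standard in the Schoen--Yau circle of ideas and can be treated by working with almost every radius, invoking the coarea formula, and using that it is the weighted area of the \emph{full} geodesic sphere---rather than that of the particular region $D_r$---which ultimately controls the estimate through Proposition \ref{prop:volume-comparison}.
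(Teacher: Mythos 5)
Your proposal is correct and follows essentially the same route as the paper: the paper's proof is exactly the chain $m_{f,\Sigma}(B_r^\Sigma(o))\leq m_{f,\Sigma}(B_r(o)\cap\Sigma)\leq m_{f,\partial B_r(o)}(\partial B_r(o))\leq \omega_2\,\e^{f(o)-2\inf f}r^2$ followed by an application of Lemma \ref{lem:SY_quadratic}. The only difference is that you spell out the competitor-surface argument and its technical caveats, which the paper leaves implicit in the single phrase ``since $\Sigma$ is $f$-area-minimizing.''
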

\begin{proof}
Let $o\in \Sigma$ and $r >0$.
Since $\Sigma$ is $f$-area-minimizing,
Proposition \ref{prop:volume-comparison} implies
\begin{equation}\label{eq:quadratic}
    m_{f,\Sigma}(B_r^\Sigma(o)) \leq m_{f,\Sigma}(B_r(o)\cap \Sigma) \leq m_{f,\partial B_r(o)}(\partial B_r(o)) \leq \omega_{2}\,\e^{f(o)-2\inf f}\,r^{2}.
\end{equation}
Therefore,
Lemma \ref{lem:SY_quadratic} leads us to the desired assertion.
\end{proof}

%%%%%%%%%%%%%%%%%%%%%%%%%%%%%%%%%%%%%%%%%%%%%%%%%%%%%%
%%%%%%%%%%%%%%%%%%%%%%%%%%%%%%%%%%%%%%%%%%%%%%%%%%%%%%
\subsection{Structure of weighted three-manifolds}\label{subsec:3D}
The structure of complete three-manifolds of non-negative Ricci curvature has been investigated by Hamilton \cite{H1}, \cite{H2}, Schoen-Yau \cite{SY2} and Liu \cite{Liu1}.
As a consequence,
Liu \cite{Liu1} has concluded that
the Milnor conjecture is true in the three dimensional case (see \cite[Corollary 1]{Liu1}).
In \cite{Liu2},
he has also examined the structure of three-dimensional weighted manifolds of non-negative $\infty$-weighted Ricci curvature via the analysis of stable $f$-minimal hypersurfaces (see \cite[Theorem 3]{Liu2}).
We generalize his result for weighted manifolds of non-negative $1$-weighted Ricci curvature.
\begin{thm}\label{thm:Liu}
Let $(M,g,f)$ be a three-dimensional complete weighted manifold.
We assume that $\Ric_f^1 \geq 0$ and $f$ is bounded.
Then the following holds:
\begin{enumerate}\setlength{\itemsep}{+3mm}
\smallskip

\item[$(1)$] If $M$ is compact, then either
  \begin{enumerate}\setlength{\itemsep}{+3mm}
  \medskip
  
  \item[$(1a)$] the universal cover is diffeomorphic to $\mathbb{S}^3$; or
  \item[$(1b)$] the universal cover is isometric to a warped product over $\mathbb{R}$;
  \end{enumerate}
 \item[$(2)$] if $M$ is non-compact, then either
   \begin{enumerate}\setlength{\itemsep}{+3mm}
   \medskip
   
  \item[$(2a)$] the universal cover is isometric to a warped product over $\mathbb{R}$; or
  \item[$(2b)$] $M$ is contractible; or
  \item[$(2c)$] for each $o\in M$,
          there exists a surface $\Sigma$ passing through $o$ such that 
          \begin{equation}\label{eq:surface}
            \second_{\Sigma} \equiv \frac{f_\nu}{2}g_\Sigma, \quad \Ric_f^1(\nu,\nu) \equiv 0.
          \end{equation}
          \end{enumerate}
\end{enumerate}
\end{thm}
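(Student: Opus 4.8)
The plan is to follow the Schoen--Yau/Liu strategy, combining the splitting theorem (Theorem \ref{thm:splitting-wylie}), the existence theory for $f$-area-minimizing surfaces, the rigidity of Proposition \ref{prop:SY}, and the classical topology of three-manifolds. Throughout I pass to the universal cover $(\tilde M,\tilde g,\tilde f)$, where $\tilde f$ is the pull-back of $f$; since $f$ is bounded, $\tilde f$ is bounded and $\Ric^1_{\tilde f}\geq 0$, so every tool applies equally on $\tilde M$. The key observation to record first is that \emph{any} complete orientable $f$-area-minimizing surface $\Sigma$ in $M$ is automatically rigid: by the volume comparison (Proposition \ref{prop:volume-comparison}) its weighted area grows at most quadratically, so Proposition \ref{prop:SY} already forces $\second_\Sigma\equiv \frac{f_\nu}{2}g_\Sigma$ and $\Ric^1_f(\nu,\nu)\equiv 0$. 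Thus the entire problem reduces to a dichotomy between producing a global splitting and producing such minimizing surfaces.

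First I would treat the compact case (1). If $\pi_1(M)$ is finite, then $\tilde M$ is a closed simply connected three-manifold, whence $\tilde M\cong \mathbb{S}^3$ by the Poincaré conjecture; this is (1a). If $\pi_1(M)$ is infinite, then $\tilde M$ is non-compact and, being a cocompact cover, contains a line: taking a deck transformation $\gamma$ of infinite order and the minimizing geodesics joining $x$ to $\gamma^k x$, the standard limiting argument produces a line. Theorem \ref{thm:splitting-wylie} then identifies $\tilde M$ with a warped product over $\mathbb{R}$, giving (1b).

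For the non-compact case (2) I would again split on whether $\tilde M$ contains a line. If it does, Theorem \ref{thm:splitting-wylie} yields (2a). If it does not, I analyze the topology of $M$. By the sphere theorem, if $\pi_2(M)=0$ and $\pi_1(M)=1$, then $M$ is weakly contractible (an open three-manifold has $H_3=0$, so $\pi_2=H_2=0$ forces $\pi_3\cong H_3=0$ and, inductively, all higher groups vanish), hence contractible by Whitehead's theorem, which is (2b). Otherwise $M$ carries an essential embedded two-sphere (when $\pi_2(M)\neq 0$) or an essential incompressible surface (arising from the nontrivial $\pi_1$), and for the prescribed base point $o$ I would minimize the weighted area in the corresponding homotopy/homology class among surfaces constrained to pass through $o$. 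Since $f$ is bounded, $m_{f,\Sigma}$ is uniformly comparable to the Riemannian area, so the classical existence and regularity theory applies verbatim and produces a complete orientable $f$-area-minimizing surface $\Sigma$ through $o$; the rigidity recorded above is then exactly \eqref{eq:surface}, giving (2c).

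The main obstacle is precisely this last existence step: guaranteeing, for \emph{every} $o\in M$, a complete $f$-area-minimizing surface through $o$ in the non-splitting, non-contractible case. The difficulties are (i) choosing the variational class so that the minimizer is nontrivial, two-sided, and passes through the prescribed point, preventing the minimizing sequence from degenerating or escaping to infinity (this is where the topological essentialness of the class and the comparability of weighted and Riemannian areas are used), and (ii) ensuring the minimizer is complete and smooth so that Proposition \ref{prop:SY} applies. The genuinely delicate subcase is $\pi_1(M)\neq 1$ with $\pi_2(M)=0$ and $\tilde M$ without a line, which forces the use of incompressible-surface theory. Once a single such surface is produced through each $o$, no further curvature analysis is required, since the rigidity is immediate from Proposition \ref{prop:SY}.
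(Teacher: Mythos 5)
Your overall skeleton (pass to the universal cover, split off the compact case via Poincar\'e plus the line/splitting dichotomy, use Proposition \ref{prop:SY} to turn any $f$-area-minimizing surface into the rigid conclusion \eqref{eq:surface}) matches the paper, and your reduction of everything to ``produce an $f$-area-minimizing surface through a prescribed point $o$'' is the right one. But the step you flag as ``the main obstacle'' is in fact a genuine gap, and the device you propose to close it --- minimizing weighted area \emph{among surfaces constrained to pass through $o$} --- does not work. A point-constrained minimizer is stationary only away from the constraint: at $o$ the surface need not be $f$-minimal (it can touch $o$ in a non-stationary way, like an obstacle problem), it need not be stable for unconstrained variations, and so neither Lemma \ref{lem:SY_quadratic} nor Proposition \ref{prop:SY} applies to it. Since \eqref{eq:surface} is exactly the rigidity of those results, your argument for $(2c)$ does not go through as written.

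The paper's resolution is a different mechanism, due to Ehrlich and adapted by Liu: one does \emph{unconstrained} minimization in the class of surfaces having nonzero intersection number with a Jordan curve $\gamma$ generating $\pi_1$ (Anderson's construction, after reducing to $\pi_1=\mathbb{Z}$ using that $M$ is a $K(\pi,1)$ with torsion-free fundamental group --- not incompressible-surface theory), and then locally deforms the metric by $g_t=\exp(-2t\rho^5)g$ so that $\Ric^1_{f,t}$ becomes strictly positive along $\gamma$ while possibly turning negative only inside a small ball about $o$. Any $f$-area-minimizer for $g_t$ meeting $\gamma$ must then enter that small ball, since otherwise Lemma \ref{lem:SY_quadratic} would force $\Ric^1_{f,t}(\nu,\nu)\equiv 0$ along it, contradicting positivity at the intersection with $\gamma$. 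Shrinking the ball and letting the metrics converge to $g$ produces a limit $f$-area-minimizing surface through $o$ to which Proposition \ref{prop:SY} legitimately applies. Note also that the extension of Ehrlich's perturbation estimate from $\Ric^\infty_f$ to $\Ric^1_f$ requires controlling the extra term $\tfrac12\,\d f(v)^2(\e^{2t\rho^5}-1)$, which the paper does explicitly; this is an additional ingredient your proposal does not anticipate. Finally, your routing of the case $\pi_2(M)\neq 0$ into the minimization step is unnecessary: by \cite[Lemma 2]{SY2} a nontrivial $\pi_2$ already forces a line in the universal cover, so that case lands in $(2a)$ directly.
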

\begin{proof}
We give a proof along the line of the argument in \cite{Liu1}, \cite{Liu2}.

We first consider the case where $M$ is compact.
If the universal cover of $M$ is compact,
then it is diffeomorphic to $\mathbb{S}^3$ due to the resolution of the Poincar\'{e} conjecture (\cite{P1}, \cite{P2}, \cite{P3}).
If the universal cover is non-compact,
then it contains a line (see e.g., \cite[Theorem 4.5]{W2}),
and hence Theorem \ref{thm:splitting-wylie} tells us that the universal cover is isometric to a warped product.

We now study the case where $M$ is non-compact.
If $\pi_2(M)$ is non-trivial,
then the universal cover contains a line (see \cite[Lemma 2]{SY2}),
and thus the universal cover is isometric to a warped product by Theorem \ref{thm:splitting-wylie}.
If $\pi_2(M)$ is trivial,
then by the Hurewicz theorem,
$M$ is a $K(\pi,1)$ space since $M$ is three dimensional and non-compact (see \cite[Lemma 3]{SY2});
in particular,
if $\pi_1(M)$ is also trivial,
then $M$ is contractible thanks to the Whitehead theorem.
Therefore,
it suffices to discuss the case where $\pi_1(M)$ is non-trivial.

We assume that $\pi_2(M)$ is trivial, and $\pi_1(M)$ is non-trivial.
From the the fact that $M$ is a $K(\pi,1)$ space,
it follows that
$\pi_1(M)$ is torsion free;
in particular,
we may assume that $\pi_1(M) = \mathbb{Z}$ and $M$ is orientable by passing to a suitable covering.
Let $\gamma$ be a Jordan curve that represents the generator of $\pi_1(M)$.
By the argument of Anderson \cite{A},
there is a complete orientable immersed $f$-area-minimizing surface in $M$ that intersects with $\gamma$ (see \cite[Lemma 2.2]{A}, and also \cite[Lemma 2]{Liu2}).
From Proposition \ref{prop:SY},
it follows that \eqref{eq:SY} holds along it.
If $\Ric^1_f>0$ on $M$,
then this is a contradiction.

We consider the case of $\Ric^1_f\geq 0$.
Let $o\in M$.
We may assume that
the point $o$ does not lie on $\gamma$ by perturbing $\gamma$ if necessary.
We adopt the metric deformation argument of Ehrlich \cite{E}.
Liu \cite{Liu2} has extended the argument to the weighted case for $\Ric^{\infty}_f$.
Similarly to \cite{E}, \cite{Liu2},
for $R > 0$,
we set $\eta(r) := R - r$ for $r \in [R/2,R]$,
and extend it smoothly to $[0,R/2)$ such that it is positive.
Furthermore,
for $t \geq 0$ we define
\begin{equation*}
\rho := \eta\circ d_o,\quad g_t := \exp(- 2 t \rho^5)g
\end{equation*}
for the distance function $d_o$ on $M$ from $o$,
where we extend $g_t$ such that it does not change outside $B_R(o)$.
For every $x \in B_R(o)\backslash B_{7R/8}(o)$,
and for every unit vector $v$ at $x$ with respect to $g$,
the argument implies
\begin{equation*}
 \Ric_{f,t}^\infty(v,v) \geq \e^{2t\rho^5}\left\{\Ric_{f}^{\infty}(v,v) + 20 t \rho^3+5 t \rho^4\left(\Delta \rho + \Hess\rho(v, v)\right)-25 t^2 \rho^8 \right\}-15 t \rho^4|\nabla f|,
\end{equation*}
where $\Ric^{\infty}_{f,t}$ denotes the $\infty$-weighted Ricci curvature for $(M,g_t,f)$.
It follows that
\begin{align*}
 \Ric_{f,t}^1(v,v) &= \Ric_{f,t}^{\infty}(v,v) + \frac{\d f(v)^2}{2}\\
 &\geq  \e^{2t\rho^5}\left\{\Ric^1_{f}(v,v) + 20 t \rho^3+5 t \rho^4\left(\Delta \rho + \Hess\rho(v, v)\right)-25 t^2 \rho^8 \right\}\\
 &\qquad \qquad\qquad \qquad\qquad \qquad \qquad -15 t \rho^4|\nabla f|-\frac{\d f(v)^2}{2}(\e^{2t\rho^5}-1)\\
 &\geq  \e^{2t\rho^5}\left\{\Ric^1_{f}(v,v) + 20 t \rho^3+5 t \rho^4\left(\Delta \rho + \Hess\rho(v, v)\right)-25 t^2 \rho^8 \right\}\\
 &\qquad \qquad\qquad \qquad\qquad \qquad \qquad -t\rho^4\left( 15|\nabla f|+(\e-1)|\nabla f|^2 \rho  \right)
\end{align*}
if $t\leq R^{-5}/2$.
Hence,
similarly to \cite{Liu2},
one can conclude that
there exists a sufficiently small $R>0$ such that
for every sufficiently small $t$,
the metric $g_t$ has positive $1$-weighted Ricci curvature over $B_R(o)\backslash B_{7R/8}(o)$.
We apply this perturbation finitely many times such that
the $1$-weighted Ricci curvature is positive along $\gamma$,
and also non-negative except for a small neighborhood of $o$.
For this new metric,
we construct a complete orientable immersed $f$-area-minimizing surface in $M$ that intersects with $\gamma$ by using the argument in \cite{A} again.
Notice that
it also has the quadratic $f$-volume growth by Proposition \ref{prop:volume-comparison} since the new metric is uniformly equivalent to the original one $g$ (cf. \cite[(6.3)]{Liu2}).
It follows that
the surface must pass through the small neighborhood of $o$.
Indeed,
if this is not true,
then the $1$-weighted Ricci curvature is non-negative along the surface,
and also positive at the intersection with $\gamma$;
however,
this contradicts with Lemma \ref{lem:SY_quadratic}.

Let us shrink the size of the neighborhood of $o$ such that $1$-weighted Ricci curvature might be negative.
We obtain a sequence of metrics,
and for each metric,
we also obtain a complete orientable immersed $f$-area-minimizing surface as above.
We may assume that
this sequence of metrics converges to the original one $g$ in $C^4$-sense.
We may also assume that
the sequence of surfaces converges to a complete orientable immersed $f$-area-minimizing surface $\Sigma$ in $M$ with respect to $g$ passing through $o$.
Thanks to Proposition \ref{prop:SY},
$\Sigma$ satisfies \eqref{eq:SY}.
Thus,
we complete the proof.
\end{proof}

\begin{rem}
In the case of $(1b)$ and $(2a)$,
the cross section of the splitting is conformal to either $\mathbb{S}^2$ or $\mathbb{C}$.
This can be proved by the same argument as in \cite{Liu2} since it has the quadratic volume growth in view of \eqref{eq:quadratic} and the upper boundedness of $f$.
\end{rem}

\begin{rem}
In the sequel to the proof of $(2b)$ and $(2c)$ in Theorem \ref{thm:Liu},
if we further assume that
the rank of $\Ric^1_f$ is at least $2$ everywhere,
then they can be improved as follows:
\begin{enumerate}\setlength{\itemsep}{+2mm}
\item[$(2b)^{\ast}$] $M$ is diffeomorphic to $\mathbb{R}^3$;
 \item[$(2c)^{\ast}$] the universal cover is isometric to a warped product over $\mathbb{R}$ (same as (2a)).
 \end{enumerate}

We first discuss $(2c)^{\ast}$.
By the additional assumption,
we see that
for each $o\in M$,
the surface $\Sigma$ is uniquely determined,
and $M$ is foliated by them (cf. \cite[Section 3]{Shi}).
Let $\{\Sigma_t\}_t$ stand for the foliation.
Notice the weighted evolution formula
\begin{align*}
\partial_t H_{f,\Sigma_t}&=\partial_t H_{\Sigma_t}- \partial_t f_\nu\\
&=\left\{-\Delta_{\Sigma_t} \,\phi - \left( \Ric(\nu,\nu)+ \left|\second_{\Sigma_t}\right|^2 \right)\phi \right\}-\left\{\phi \nabla^2 f(\nu,\nu)-\langle \nabla_{\Sigma_t}f,\nabla_{\Sigma_t}\phi  \rangle   \right\}\\
&=-\Delta_{f,\Sigma_t} \,\phi - \left( \Ric_f^\infty(\nu,\nu)+ \left|\second_{\Sigma_t}\right|^2 \right)\phi
\end{align*}
holds for the variation field $\phi \,\nu$ on $\Sigma_t$,
which can be derived from the evolution formula for the mean curvature and unit normal vector filed in the unweighted case (see e.g., \cite[Theorem 3.2]{HuP}). %Huisken-Poldenは符号が間違っているので注意
From the $f$-minimality, \eqref{eq:surface} and Lemma \ref{lem:key},
it follows that
\begin{equation*}
0=\Delta_{f,\Sigma_t} \,\phi + \left( \Ric_f^\infty(\nu,\nu)+ \left|\second_{\Sigma_t}\right|^2 \right)\phi
=\Delta_{f,\Sigma_t} \,\phi + \left( \Ric_f^1(\nu,\nu)+\left| \second_{\Sigma_t} - \frac{f_\nu}{2}g_{\Sigma_t} \right|^2 \right)\phi=\Delta_{f,\Sigma_t}\,\phi.
\end{equation*}
By a Liouville result for $f$-harmonic functions together with \eqref{eq:quadratic},
$\phi$ must be constant (see \cite[Lemma 3]{Liu2}).
By the re-parametrization,
we may assume $\phi \equiv 1$,
and conclude $\nabla_\nu \nu=0$.
Notice that
the second fundamental form of $\Sigma_t$ coincides with the Hessian of the distance function from $\Sigma_0$,
and satisfies \eqref{eq:surface}.
Thus,
by the same argument as in the proof of Theorem \ref{thm:splitting-wylie} in \cite{W2},
the universal cover is isometric to a warped product over $\mathbb{R}$ in the form of
\begin{equation*}
\left(\mathbb{R}\times \Sigma_0,dt^2+\e^{f(\gamma_z(t))-f(z)}g_{\Sigma_0}  \right)
\end{equation*}
with $f(\gamma_z(t))=f_1(t)+f_2(z)$ for some $f_1:\mathbb{R}\to \mathbb{R}$ and $f_2:\Sigma_0\to \mathbb{R}$,
where $\gamma_z$ is the line with $\gamma_z(0)=z$ that is orthogonal to $\Sigma_0$.
This proves the desired assertion.

We next observe $(2b)^{\ast}$.
In the sequel to the proof of $(2b)$,
$M$ must be irreducible with the help of the resolution of the Poincar\'{e} conjecture (see e.g., \cite[Lemma 2.2]{CLX}).
Hence,
by a topological result by Husch-Price \cite{HusP},
it suffices to prove that $M$ is simply connected at infinity.
Suppose that
$M$ is not simply connected at infinity.
Fix $o\in M$,
and a compact subset $\mathcal{K}$ of $M$ to which $o$ does not belong.
We apply the above metric deformation argument to $\mathcal{K}$ instead of $\gamma$,
and obtain a metric such that the $1$-weighted Ricci curvature is positive over $\mathcal{K}$,
and also non-negative except for a small neighborhood of $o$.
Since $M$ is not simply connected at infinity,
by the same argument as in \cite{Liu1},
there exists a complete orientable immersed $f$-area-minimizing surface in $M$ that intersects with $\mathcal{K}$ and passes through the small neighborhood of $o$.
By shrinking the size of the neighborhood of $o$ such that the $1$-weighted Ricci curvature might be negative,
we obtain a complete orientable immersed $f$-area-minimizing surface $\Sigma$ in $M$ with respect to $g$ passing through $o$ and satisfying \eqref{eq:surface}.
If the rank of $\Ric^1_f$ is at least $2$ everywhere,
then one can apply the argument in the above paragraph,
and conclude that $M$ is isometric to a warped product over $\mathbb{R}$.
This contradicts with the assumption that $M$ is not simply connectedness at infinity.
\end{rem}

\begin{rem}
The assumption for upper boundedness of $f$ in Theorem \ref{thm:splitting-wylie} has been weakened as the $f$-completeness introduced in \cite{W2}, \cite{WY} (cf. \cite[Remark 1.2]{W2}). 
In the same manner,
the assumption in Theorem \ref{thm:Liu} also can be weakened as the combination of the $f$-completeness and the lower boundedness of $f$.
%The relaxation of the upper boundedness may be meaningful in view of the substatic geometry.
%Actually,
%a typical object is a substatic manifold $(M,\g,V)$ with minimal boundary such that $V$ is positive in the interior and zero on the boundary,
%and the vanishing property corresponds to the divergent property of $f$ on the corresponding weighted manifold $(M,g,f)$.
\end{rem}

%%%%%%%%%%%%%%%%%%%%%%%%%%%%%%%%%%%%%%%%%
%%%%%%%%%%%%%%%%%%%%%%%%%%%%%%%%%%%%%%%%%
%%%%%%%%%%%%%%%%%%%%%%%%%%%%%%%%%%%%%%%%%
\section{Non-existence and smooth compactness}\label{sec:compactness}

In this section,
we show non-existence results for stable $f$-minimal hypersurfaces in weighted manifolds under the positivity of $1$-weighted Ricci curvature.
Furthermore,
we prove smooth compactness theorems.

%%%%%%%%%%%%%%%%%%%%%%%%%%%%%%%%%%%%%%%%%
%%%%%%%%%%%%%%%%%%%%%%%%%%%%%%%%%%%%%%%%%
%%%%%%%%%%%%%%%%%%%%%%%%%%%%%%%%%%%%%%%%%
\subsection{Non-existence}\label{sec:volume}

We begin with the following (cf. \cite[Theorem 3]{CMZ2}):
\begin{prop}\label{prop:non-existence}
Let $(M,g,f)$ be a complete weighted manifold of positive $1$-weighted Ricci curvature.
Then there is no complete two-sided immersed stable $f$-minimal hypersurface $\Sigma$ in $M$ with $m_{f,\Sigma}(\Sigma) < + \infty$.
\end{prop}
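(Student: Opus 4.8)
The plan is to feed a sequence of cutoff functions into the revised stability inequality \eqref{eq:key} of Lemma \ref{lem:key} and to exploit that its right-hand side is controlled by the (finite) weighted volume of $\Sigma$. The decisive conceptual point is that \eqref{eq:key} carries the $1$-weighted Ricci curvature rather than the $\infty$-weighted one, so that the hypothesis $\Ric_f^1>0$ enters directly; arguing instead with the classical stability inequality in \eqref{eq:stable} would require $\Ric_f^{\infty}>0$, which is strictly stronger by the monotonicity \eqref{eq:monotonicity}.

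Concretely, I would fix $o\in\Sigma$ and, for $j\in\IN$, set $\phi_j:=1$ on $B^{\Sigma}_{j}(o)$, $\phi_j:=2-j^{-1}d^{\Sigma}_{o}$ on $B^{\Sigma}_{2j}(o)\setminus B^{\Sigma}_{j}(o)$, and $\phi_j:=0$ outside $B^{\Sigma}_{2j}(o)$, where $d^{\Sigma}_{o}$ is the intrinsic distance from $o$ on the complete manifold $\Sigma$. These are compactly supported and Lipschitz, with $|\nabla_{\Sigma}\phi_j|\le j^{-1}$ on the annulus and $|\nabla_{\Sigma}\phi_j|=0$ elsewhere; they are approximated by admissible smooth test functions in the standard way. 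Plugging $\phi_j$ into \eqref{eq:key}, discarding the non-negative contribution of the annulus on the left and using $\phi_j\equiv 1$ on $B^{\Sigma}_{j}(o)$, gives
\begin{equation*}
\int_{B^{\Sigma}_{j}(o)}\left(\Ric_f^1(\nu,\nu)+\left|\second_\Sigma-\frac{f_\nu}{n-1}g_\Sigma\right|^2\right)\d m_{f,\Sigma}\leq\int_\Sigma|\nabla_\Sigma\phi_j|^2\,\d m_{f,\Sigma}\leq\frac{1}{j^2}\,m_{f,\Sigma}(\Sigma),
\end{equation*}
where the last estimate uses $|\nabla_\Sigma\phi_j|^2\le j^{-2}$ together with $m_{f,\Sigma}(B^{\Sigma}_{2j}(o))\le m_{f,\Sigma}(\Sigma)<+\infty$.

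Finally I would let $j\to+\infty$. Since $m_{f,\Sigma}(\Sigma)<+\infty$, the right-hand side tends to $0$, while the integrand on the left is pointwise non-negative, so monotone convergence forces
\begin{equation*}
\int_\Sigma\left(\Ric_f^1(\nu,\nu)+\left|\second_\Sigma-\frac{f_\nu}{n-1}g_\Sigma\right|^2\right)\d m_{f,\Sigma}=0.
\end{equation*}
But $\Ric_f^1>0$ makes $\Ric_f^1(\nu,\nu)$ strictly positive at every point of the nonempty hypersurface $\Sigma$, so the integrand is strictly positive and $m_{f,\Sigma}(\Sigma)>0$, a contradiction. I expect no serious obstacle: the finiteness of the weighted volume is exactly what kills the gradient term, and Lemma \ref{lem:key} supplies the positivity in the right curvature scale. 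The only points demanding minor care are the passage from Lipschitz cutoffs to admissible smooth test functions and the use of completeness of $\Sigma$ so that the balls $B^{\Sigma}_{r}(o)$ exhaust $\Sigma$; alternatively one may avoid the limit of integrands by keeping $j$ large enough that $m_{f,\Sigma}(B^{\Sigma}_{j}(o))>0$, deducing $0<\mathrm{const}\le j^{-2}m_{f,\Sigma}(\Sigma)$, and letting $j\to+\infty$ to reach the contradiction.
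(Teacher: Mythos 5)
Your proposal is correct and follows essentially the same route as the paper: the paper also tests the revised stability inequality of Lemma \ref{lem:key} with radial cutoffs $\phi_i=\eta(d_o^\Sigma/i)$ supported in $B_{2i}^\Sigma(o)$ and uses the finiteness of $m_{f,\Sigma}(\Sigma)$ to make the gradient term vanish (via $m_{f,\Sigma}(B_{2i}^\Sigma(o)\setminus B_i^\Sigma(o))\to 0$, while you use the equivalent bound $j^{-2}m_{f,\Sigma}(\Sigma)\to 0$), reaching the same contradiction with $\Ric_f^1>0$. No gaps.
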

\begin{proof}
We give a proof by contradiction.
We assume that such $\Sigma$ exists.
Let $\eta \in C^{\infty}([0,+\infty))$ be a function such that $\eta \equiv 1$ on $[0,1]$,
$\eta \equiv 0$ on $[2, + \infty)$ and $|\eta'| \leq 2$.
Let $o\in \Sigma$.
For each integer $i\geq 1$,
we define
\begin{equation*}
    \phi_i(x) := \eta \left( \frac{d^{\Sigma}_o(x)}{i} \right).
\end{equation*}
By virtue of Lemma \ref{lem:key},
for every sufficiently large $i$,
\begin{align*}
0 &\leq \int_\Sigma\,  \left\{|\nabla_\Sigma \phi_i|^2- \left( \Ric_f^1(\nu,\nu)+ \left| \second_\Sigma  -\frac{f_\nu}{n-1}g_{\Sigma} \right|^2 \right)\phi^2_i \right\}\,\d m_{f,\Sigma}\\ \notag
&\leq \int_{B_{2i}^{\Sigma}(o)\backslash B_{i}^{\Sigma}(o)} |\nabla_{\Sigma}\, \phi_i|^2 \ \d m_{f,\Sigma} - \int_{B_{2i}^{\Sigma}(o)} \Ric_f^1(\nu,\nu)\phi_i^2\ \d m_{f,\Sigma}\\ \notag
    &\leq m_{f,\Sigma}(B_{2i}^{\Sigma}(o)\backslash B_{i}^{\Sigma}(o))  - \int_{B_{2}^{\Sigma}(o)} \Ric_f^1(\nu,\nu)\ \d m_{f,\Sigma}.
\end{align*}
By the finiteness of volume,
we see $m_{f,\Sigma}(B_{2i}^{\Sigma}(o)\backslash B_{i}^{\Sigma}(o)) \to 0$ as $i\to +\infty$,
and hence
\begin{equation*}
\int_{B_{2}^{\Sigma}(o)} \Ric_f^1(\nu,\nu)\ \d m_{f,\Sigma}\leq 0.
\end{equation*}
This contradicts with the positivity of $\Ric^1_f$.
We complete the proof.
\end{proof}

Impera-Rimoldi \cite{IR1} have presented non-existence results for stable $f$-minimal hypersurfaces satisfying volume growth conditions under the positivity of $\infty$-weighted Ricci curvature (see \cite[Theorems A and B]{IR1}).
We show similar results for $1$-weighted Ricci curvature.
\begin{prop}\label{prop:IR}
Let $(M,g,f)$ be a complete weighted manifold.
For $K > 0$,
we assume $\Ric_f^1 \geq Kg$.
Then there is no complete two-sided immersed stable $f$-minimal hypersurface in $M$ with $m_{f,\Sigma}(B^\Sigma_r(o)) = O(\e^{\alpha r})$ as $r\rightarrow + \infty$ for $o\in \Sigma$ and $\alpha \in (0, 2\sqrt{K})$.
\end{prop}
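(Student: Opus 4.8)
The plan is to convert stability, via Lemma \ref{lem:key}, into a spectral gap for the weighted Laplacian on $\Sigma$, and then to contradict this gap using an exponentially decaying test function whose decay rate is tuned to the prescribed volume growth. Suppose for contradiction that such a $\Sigma$ exists. Since $\Ric_f^1 \geq Kg$ forces $\Ric_f^1(\nu,\nu) \geq K$ along $\Sigma$, and since the term $\left|\second_\Sigma - \frac{f_\nu}{n-1}g_\Sigma\right|^2$ is nonnegative, the revised stability inequality \eqref{eq:key} yields
\[
K\int_\Sigma \phi^2 \,\d m_{f,\Sigma} \leq \int_\Sigma |\nabla_\Sigma \phi|^2\,\d m_{f,\Sigma}
\]
for every compactly supported Lipschitz function $\phi$ on $\Sigma$; that is, the bottom of the spectrum of $-\Delta_{f,\Sigma}$ is at least $K$.

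Next I would test this inequality against $\phi_R := \e^{-\beta r}\,\eta_R$, where $r := d^\Sigma_o$ is the intrinsic distance from a fixed $o\in\Sigma$, $\eta_R$ is the standard cutoff equal to $1$ on $B^\Sigma_R(o)$, vanishing outside $B^\Sigma_{2R}(o)$ with $|\nabla_\Sigma \eta_R|\leq C/R$, and $\beta$ is a constant chosen in the window $\alpha/2 < \beta < \sqrt{K}$. This window is nonempty \emph{precisely} because the hypothesis reads $\alpha < 2\sqrt K$; this is the only place the sharp threshold enters. Since $|\nabla_\Sigma r| = 1$ almost everywhere, expanding $|\nabla_\Sigma \phi_R|^2$ and substituting into the spectral gap gives, after rearrangement,
\[
(K - \beta^2)\int_\Sigma \e^{-2\beta r}\eta_R^2\,\d m_{f,\Sigma} \leq \int_\Sigma \e^{-2\beta r}|\nabla_\Sigma \eta_R|^2\,\d m_{f,\Sigma} - 2\beta\int_\Sigma \e^{-2\beta r}\eta_R\langle \nabla_\Sigma \eta_R, \nabla_\Sigma r\rangle\,\d m_{f,\Sigma},
\]
where, crucially, no Laplacian of the distance function appears, so no comparison theorem on $\Sigma$ is needed.

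The heart of the argument, and the step I expect to demand the most care, is showing that the right-hand side tends to $0$ as $R\to+\infty$. Both terms there are supported on the annulus $B^\Sigma_{2R}(o)\setminus B^\Sigma_R(o)$ and are bounded by a constant multiple of $R^{-1}\int_{B^\Sigma_{2R}(o)\setminus B^\Sigma_R(o)} \e^{-2\beta r}\,\d m_{f,\Sigma}$. The volume growth hypothesis $m_{f,\Sigma}(B^\Sigma_r(o)) = O(\e^{\alpha r})$ together with $2\beta > \alpha$ guarantees, via a layer-cake estimate, that $\e^{-\beta r}\in L^2(m_{f,\Sigma})$; hence $\int_\Sigma \e^{-2\beta r}\,\d m_{f,\Sigma} < +\infty$, and the annular integrals, being tails of a convergent integral, vanish in the limit. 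Passing to the limit, the left-hand side converges to $(K-\beta^2)\int_\Sigma \e^{-2\beta r}\,\d m_{f,\Sigma}$, a strictly positive quantity since $\beta^2 < K$, which contradicts the nonpositivity forced by the right-hand side. Two routine points remain: justifying the use of Lipschitz test functions in \eqref{eq:key} by a density approximation, and observing that the finite-volume case is automatically subsumed, since $\int_\Sigma \e^{-2\beta r}\,\d m_{f,\Sigma}$ is then trivially finite (and such $\Sigma$ is already excluded by Proposition \ref{prop:non-existence}, as $\Ric^1_f \geq Kg$ implies $\Ric^1_f > 0$).
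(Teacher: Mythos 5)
Your proof is correct and follows essentially the same route as the paper's: both derive the spectral lower bound $\lambda_{1,\Sigma}\geq K$ from the revised stability inequality \eqref{eq:key} and contradict it with an upper bound on $\lambda_{1,\Sigma}$ forced by the volume growth. The only difference is that the paper obtains the latter by citing Brooks' volume-entropy estimate $\lambda_{1,\Sigma}\leq \frac{1}{4}\bigl(\limsup_{r\to+\infty}r^{-1}\log m_{f,\Sigma}(B^\Sigma_r(o))\bigr)^2\leq \alpha^2/4$, whereas you reprove exactly that estimate with the explicit test functions $\e^{-\beta r}\eta_R$; your window $\alpha/2<\beta<\sqrt{K}$ is precisely where the hypothesis $\alpha<2\sqrt{K}$ enters, matching the paper's comparison $\alpha^2/4<K$.
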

\begin{proof}
The proof is done by contradiction.
We possess the following relation of Brooks type between the bottom of the spectrum $\lambda_{1,\Sigma}$ of $\Delta_{f,\Sigma}$ and the volume entropy (cf. \cite[Theorem 1]{Brooks}, \cite[Theorem A]{IR1}):
\begin{equation*}
 \lambda_{1,\Sigma} \leq \frac{1}{4}\left( \limsup_{r\rightarrow + \infty} \frac{\log m_{f,\Sigma}(B^\Sigma_r(o))}{r} \right)^2. % \leq \frac{\alpha^2}{4}. 
\end{equation*}
This together with the volume growth assumption leads us to
\begin{equation}\label{eq:Brooks}
\inf_{\phi\in C_0^\infty(\Sigma)\setminus \{0\}} \frac{\displaystyle \int_\Sigma |\nabla_\Sigma \,\phi|^2\ \d m_{f,\Sigma}}{\displaystyle \int_\Sigma \phi^2 \ \d m_{f,\Sigma}}\leq \lambda_{1,\Sigma}\leq \frac{\alpha^2}{4}.
\end{equation}
On the other hand,
the stability inequality \eqref{eq:key} yields
\begin{align*}
K \int_\Sigma \phi^2 \ \d m_{f,\Sigma}\leq \int_\Sigma \left(  \Ric_f^1(\nu,\nu) + \left| \second_\Sigma - \frac{f_\nu}{n-1} g_\Sigma\right|^2  \right)\phi^2 \ \d m_{f,\Sigma}\leq \int_\Sigma |\nabla_\Sigma \,\phi|^2\ \d m_{f,\Sigma}
\end{align*}
for every $\phi \in C^\infty_0(\Sigma)$;
in particular,
the left hand side of \eqref{eq:Brooks} is bounded from below by $K$.
This contradicts with $\alpha \in (0, 2\sqrt{K})$.
\end{proof}

We also have the following:
\begin{prop}\label{prop:IR2}
Let $(M,g,f)$ be an $n$-dimensional complete weighted manifold.
For $K > 0$,
we assume $\Ric_f^1 \geq K g$.
Then there is no complete two-sided immersed $f$-minimal hypersurface $\Sigma$ in $M$ with $\Ind_f(\Sigma)<+\infty$ such that one of the following holds:
\begin{enumerate}\setlength{\itemsep}{+0.7mm}
 \item[$(1)$] $m_{f,\Sigma}(\Sigma) = + \infty$ and $m_{f,\Sigma}(B^\Sigma_r(o)) \leq Cr^\alpha$ for any $r\geq r_0$ and some positive constants $C,r_0$ and $\alpha$;
\item[$(2)$] it holds that
\begin{equation}\label{eq:Main21}
m_{f,\partial B^{\Sigma}_r(o)}(\partial B^{\Sigma}_r(o))^{-1} \notin L^1(+\infty),\quad \left| \second_\Sigma - \frac{f_\nu}{n-1} g_\Sigma\right| \notin L^2(\Sigma,m_{f,\Sigma}).
\end{equation}
\end{enumerate}
\end{prop}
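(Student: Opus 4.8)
The plan is to argue by contradiction in each of the two cases, assuming a hypersurface $\Sigma$ as in the statement exists. The common first step is to exploit the finiteness of the $f$-index: since $\Ind_f(\Sigma)<+\infty$, there is a compact set $\mathcal{K}\subset\Sigma$ such that $\Sigma\setminus\mathcal{K}$ is stable, i.e.\ the inequality \eqref{eq:key} holds for every $\phi\in C_0^\infty(\Sigma\setminus\mathcal{K})$ (the standard localization of the index, as in \cite{IR1}). Using $\Ric^1_f\geq Kg$ along $\Sigma$, so that $\Ric^1_f(\nu,\nu)\geq K$, I obtain
\begin{equation*}
K\int_\Sigma\phi^2\,\d m_{f,\Sigma}+\int_\Sigma\left|\second_\Sigma-\frac{f_\nu}{n-1}g_\Sigma\right|^2\phi^2\,\d m_{f,\Sigma}\leq\int_\Sigma|\nabla_\Sigma\phi|^2\,\d m_{f,\Sigma}
\end{equation*}
for all $\phi\in C_0^\infty(\Sigma\setminus\mathcal{K})$, which is the inequality I would feed into both arguments.

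For case $(1)$ I would run a spectral argument. Dropping the nonnegative second fundamental form term above shows that the bottom of the spectrum of $-\Delta_{f,\Sigma}$ on $\Sigma\setminus\mathcal{K}$ is at least $K$; by the Donnelly--Li decomposition principle (together with the monotonicity of the Dirichlet bottom under shrinking domains) this forces $\inf\sigma_{\mathrm{ess}}(-\Delta_{f,\Sigma})\geq K>0$. On the other hand, the polynomial growth $m_{f,\Sigma}(B^\Sigma_r(o))\leq Cr^\alpha$ makes the volume entropy vanish, so the Brooks-type estimate used in the proof of Proposition \ref{prop:IR} gives $\la_{1,\Sigma}=0$. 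Since $0=\la_{1,\Sigma}<K\leq\inf\sigma_{\mathrm{ess}}(-\Delta_{f,\Sigma})$, the value $0$ lies below the essential spectrum and is therefore an isolated eigenvalue of finite multiplicity, so there is a nonzero $u\in L^2(\Sigma,m_{f,\Sigma})$ with $\Delta_{f,\Sigma}u=0$. A cutoff integration by parts (valid by completeness) shows $\int_\Sigma|\nabla_\Sigma u|^2\,\d m_{f,\Sigma}=0$, so $u$ is constant; but $m_{f,\Sigma}(\Sigma)=+\infty$ forces constants out of $L^2$, whence $u\equiv0$, a contradiction.

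For case $(2)$ I would run a parabolicity argument. The hypothesis $m_{f,\partial B_r^\Sigma(o)}(\partial B_r^\Sigma(o))^{-1}\notin L^1(+\infty)$ in \eqref{eq:Main21} is the weighted Grigor'yan--Karp criterion guaranteeing that $(\Sigma,m_{f,\Sigma})$ is parabolic, hence admits cutoffs $\phi_i$ with $0\leq\phi_i\leq1$, with $\phi_i\to1$ locally uniformly and $\int_\Sigma|\nabla_\Sigma\phi_i|^2\,\d m_{f,\Sigma}\to0$. Multiplying each $\phi_i$ by a fixed cutoff that vanishes near $\mathcal{K}$ and equals $1$ outside a neighborhood of $\mathcal{K}$, inserting the product into the displayed inequality and discarding the nonnegative $K$-term, I would pass to the limit via Fatou's lemma to obtain $\int_\Sigma|\second_\Sigma-\tfrac{f_\nu}{n-1}g_\Sigma|^2\,\d m_{f,\Sigma}<+\infty$, that is $|\second_\Sigma-\tfrac{f_\nu}{n-1}g_\Sigma|\in L^2(\Sigma,m_{f,\Sigma})$. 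This contradicts the second condition in \eqref{eq:Main21}.

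The main obstacle I anticipate is organizing the test functions so that the exceptional compact set $\mathcal{K}$ arising from the finite index coexists with the behavior at infinity. In case $(2)$ one must verify that the product of the parabolic cutoffs with the fixed cutoff near $\mathcal{K}$ still has Dirichlet energy bounded uniformly in $i$ (the only troublesome term is supported where the fixed cutoff varies, a fixed relatively compact annulus), and in case $(1)$ one must justify the decomposition principle and the $L^2$-Liouville step in the weighted setting. These are the delicate points; by contrast, the algebraic reduction of the stability inequality is immediate from Lemma \ref{lem:key}.
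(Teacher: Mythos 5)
Your argument is correct and reaches the same contradictions as the paper, but where the paper disposes of both cases by combining Lemma \ref{lem:key} with three results of Impera--Rimoldi quoted as black boxes (\cite[Propositions 5, 6 and 7]{IR1}: the Fischer--Colbrie-type localization of finite index, the nonpositivity of $\inf_{\Sigma\setminus B^\Sigma_r(o)}$ of the potential under infinite weighted volume with polynomial growth, and the negativity of the exterior bottom spectrum when the potential is non-integrable and $m_{f,\partial B^{\Sigma}_r(o)}(\partial B^{\Sigma}_r(o))^{-1} \notin L^1(+\infty)$), you reprove the analytic core directly. In case $(1)$ your mechanism genuinely differs from the cited one: rather than invoking that the infimum of the potential over ends is $\leq 0$, you combine the weighted Brooks estimate (already used in Proposition \ref{prop:IR}) with the decomposition principle to trap $\lambda_{1,\Sigma}=0$ strictly below the essential spectrum, extract an $L^2$ $f$-harmonic eigenfunction, and rule it out by the Yau-type cutoff argument together with $m_{f,\Sigma}(\Sigma)=+\infty$; note that this uses only $\Ric_f^1(\nu,\nu)\geq K$ and discards the $\left|\second_\Sigma-\frac{f_\nu}{n-1}g_\Sigma\right|^2$ term, which is legitimate. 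In case $(2)$ your parabolicity-cutoff argument is essentially \cite[Proposition 7]{IR1} run in contrapositive, and the bookkeeping you flag as delicate does work: the only extra term is $2\int_\Sigma\phi_i^2|\nabla_\Sigma\psi|^2\,\d m_{f,\Sigma}\leq 2\int_\Sigma|\nabla_\Sigma\psi|^2\,\d m_{f,\Sigma}$, a fixed finite constant, which is all Fatou requires, and local integrability of the smooth integrand extends the bound from $\{\psi=1\}$ to all of $\Sigma$. The trade-off is brevity versus self-containedness; mathematically both routes are sound.
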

\begin{proof}
We give a proof by contradiction.
If $(1)$ holds,
then \cite[Proposition 6]{IR1} together with Lemma \ref{lem:key} implies
\begin{equation*}
    0 \geq \inf_{\Sigma\backslash B^\Sigma_r(o)} \left( \Ric_f^1 (\nu,\nu) + \left| \second_\Sigma - \frac{f_\nu}{n-1} g_\Sigma\right|^2 \right) \geq K
\end{equation*}
for every $r\geq r_0$,
and this contradicts with $K>0$.
On the other hand,
if $(2)$ holds,
then the second condition in \eqref{eq:Main21} and Lemma \ref{lem:key} yield
\begin{equation*}
\Ric_f^\infty(\nu,\nu) + \left| \second_\Sigma \right|^2 = \Ric_f^1(\nu,\nu) + \left| \second_\Sigma - \frac{f_\nu}{n-1}g_\Sigma \right|^2 \notin L^1(\Sigma,m_{f,\Sigma}).
\end{equation*}
By the first condition in \eqref{eq:Main21} and \cite[Proposition 7]{IR1},
for every compact subset $\Omega$ in $\Sigma$,
the bottom of the spectrum of the stability operator $\mathcal{L}_{f,\Sigma}$ over $\Sigma\setminus \Omega$ is negative.
This contradicts with the finiteness of $f$-index and \cite[Proposition 5]{IR1}.
\end{proof}

%%%%%%%%%%%%%%%%%%%%%%%%%%%%%%%%%%%%%%%%%
%%%%%%%%%%%%%%%%%%%%%%%%%%%%%%%%%%%%%%%%%
%%%%%%%%%%%%%%%%%%%%%%%%%%%%%%%%%%%%%%%%%
\subsection{Smooth compactness}\label{subsec:compactness}
Choi-Schoen \cite{CS} have established a smooth compactness of closed minimal surfaces in a compact three-dimensional manifold of positive Ricci curvature.
Sharp \cite{Sh} has extended their smooth compactness to the higher dimensional case.
In the weighted case,
such smooth compactness have been investigated by Li-Wei \cite{LW}, Cheng-Mejia-Zhou \cite{CMZ1}, \cite{CMZ2} and Barbosa-Sharp-Wei \cite{BSW} under the positivity of $\infty$-weighted Ricci curvature (see also \cite{CM1}, \cite{DX} for self-shrinker).
We prove smooth compactness under the positivity of $1$-weighted Ricci curvature.
Our main result is the following (cf. \cite{CS}, \cite[Theorem 1]{LW}):
\begin{thm}\label{thm:LW}
Let $(M,g,f)$ be a three dimensional closed weighted manifold of positive $1$-weighted Ricci curvature.
Then the space of smooth closed embedded $f$-minimal surfaces in $M$ of fixed topology is compact in the smooth topology.
\end{thm}

To prove it,
we first present the following result of Sharp type (cf. \cite{Sh}, \cite[Theorem 1.3]{BSW}):
\begin{prop}\label{prop:BWS}
For $3 \leq n \leq 7$,
let $(M,g,f)$ be an $n$-dimensional closed weighted manifold of positive $1$-weighted Ricci curvature.
For $\Lambda>0$ and a non-negative integer $I$,
let $\mathcal{S}_{\Lambda,I}$ be the space of smooth closed embedded $f$-minimal hypersurfaces $\Sigma$ in $M$ with 
\begin{equation*}
m_{f,\Sigma}(\Sigma)\leq \Lambda,\quad     \Ind_f(\Sigma) \leq I.
\end{equation*}
Then $\mathcal{S}_{\Lambda,I}$ is compact in the smooth topology.
\end{prop}
\begin{proof}
By Proposition \ref{prop:BM},
the universal cover of $M$ is compact;
in particular,
$M$ has finite fundamental group (see also Remark \ref{rem:closed}).
By passing to the universal cover,
we may assume that $M$ is simply connected.
Notice that
each hypersurface in $\mathcal{S}_{\Lambda,I}$ is two-sided (cf. \cite[Proposition 8]{CMZ2}). %Mはorientableでclosed embeddedよりSigmaもorientableで特にtwo-sided
With the help of \cite[Proposition 3.3]{BSW},
every sequence $\{\Sigma_i\}_i$ in $\mathcal{S}_{\Lambda,I}$ has a convergent subsequence to a smooth closed embedded $f$-minimal hypersurface $\Sigma$ such that
the convergence is smooth (possibly with finite multiplicity) away from a locally finite collection of points in the limit $\Sigma$. %外がclosedでproperだからclosedになる
%Note that
%$\Sigma$ is also non-trivial by Proposition \ref{prop:intersection}.
To show that the convergence is smooth everywhere,
it suffices to verify that the multiplicity of the convergence is equal to one in view of the Allard regularity theorem.
If it is greater than one,
then $\Sigma$ is stable according to the argument of the proof of \cite[Theorem 1.3]{BSW} (cf. \cite[Proposition 3.2]{CM1}, \cite[Proposition 11]{CMZ2}).
This contradicts with Proposition \ref{prop:non-existence},
and hence the convergence is smooth everywhere.
Also,
by the same argument in \cite[Theorem 1.3]{BSW},
we have $\Ind_f(\Sigma) \leq I$.
Thus,
we complete the proof.
\end{proof}

Due to Proposition \ref{prop:BWS},
it suffices to derive index and volume estimates under the positivity of 1-weighted Ricci curvature in the three dimensional case.
For the index,
we possess the following estimate of Ejiri-Micallef type (see \cite{EM}, \cite[Proposition 1.8]{BSW}):
\begin{prop}[\cite{EM}, \cite{BSW}]\label{prop:EM}
Let $(M,g,f)$ be a three dimensional closed weighted manifold.
Let $\Sigma$ be a closed embedded $f$-minimal surface in $M$ of genus $\gamma$.
Then we have
\begin{equation*}
\Ind_f(\Sigma)\leq C_{M,f}\left( m_{f,\Sigma}(\Sigma)+\gamma-1 \right).
\end{equation*}
\end{prop}

We now aim to obtain volume estimates.
We deduce such estimates from a lower bound of the first eigenvalue of the Laplacian induced from an affine connection.
On an $n$-dimensional weighted manifold $(M,g,f)$,
we consider an affine connection
\begin{equation}\label{eq:connection}
\D_XY := \nabla_X Y -\frac{\d f(X)}{n-1}Y - \frac{\d f(Y)}{n-1}X.
\end{equation}
It has been observed that
the Ricci tensor induced from this affine connection coincides with $\Ric^1_f$ (see \cite[Proposition 3.3]{WY}, \cite{LX}).
Li-Xia \cite{LX} have studied a family of affine connections including \eqref{eq:connection},
and obtained an integral formula of Reilly type.
In a special case for \eqref{eq:connection},
it can be stated as follows (see \cite[Theorem 1.1]{LX}, and also \cite[Proposition 2.5, Remark 2.6]{FS}):
\begin{prop}[\cite{LX}]\label{prop:1-reilly}
Let $(M,g,f)$ be an $n$-dimensional compact weighted manifold with boundary.
Then for every $\varphi \in C^{\infty}(M)$,
\begin{align*}
 &\quad \,\,\int_M  \left\{ \left(\Delta_{\frac{n}{n-1}f} \,\varphi\right)^2 - \Ric_f^1\left( \nabla \varphi, \nabla \varphi \right) - \left| \mathrm{Hess}\, \varphi - \frac{1}{n-1}\langle \nabla\varphi,\nabla f \rangle g\right|^2 \right\}\ \d m_f\\
    &= \int_{\partial M} \left(\varphi_\nu^{\,2} H_{f,\partial M} +  2 \varphi_\nu \Delta_{f, \partial M}\,  \psi  + \mathrm{II}_{\partial M}\left(\nabla_{\partial M} \,\psi, \nabla_{\partial M}\, \psi \right)  \right) \ \d m_{f,\partial M},
\end{align*}
where $\psi := \varphi|_{\partial M}$,
and $\nu$ is the outer unit normal vector field on $\partial M$.
\end{prop}

In order to formulate the integral formula for general affine connections,
Li-Xia \cite{LX} have introduced the notion of the \textit{affine Laplacian} (see \cite[Definition 2.4]{LX}).
On a closed embedded hypersurface $\Sigma$ in an $n$-dimensional closed weighted manifold $(M,g,f)$,
the affine Laplacian associated with \eqref{eq:connection} is given by
\begin{equation*}
\Delta^\D_\Sigma:=\e^{\frac{f}{n-1}}\,\Delta_{f,\Sigma}.  
\end{equation*}
By direct calculations,
it coincides with the weighted Laplacian over a weighted manifold
\begin{equation*}
\left(\Sigma,\e^{-\frac{f}{n-1}}\,g_\Sigma, \frac{n+1}{2(n-1)}f\right).
\end{equation*}
In particular,
it is self-adjoint with respect to the weighted measure $m_{\frac{n}{n-1}f,\Sigma}$,
and its first non-zero eigenvalue has the following variational characterization:
\begin{equation}\label{eq:variation}
\lambda^\D_{1,\Sigma}=\inf_{\phi}\,\frac{\displaystyle \int_{\Sigma}\,|\nabla_\Sigma\, \phi|^2\,\d m_{f,\Sigma}}{\displaystyle \int_{\Sigma}\,\phi^2\,\d m_{\frac{n}{n-1}f,\Sigma}},
\end{equation}
where the infimum is taken over all $\phi \in C^{\infty}(\Sigma)\setminus \{0\}$ with $\int_{\Sigma}\,\phi\,\d m_{\frac{n}{n-1}f,\Sigma}=0$.
Furthermore,
we consider a conformally deformed metric
\begin{equation}\label{eq:conformal}
g_{f}:=\e^{-\frac{2n}{(n-1)^2}f}g
\end{equation}
over $M$.
Notice that
its volume measure $v_{g_{f},\Sigma}$ over $\Sigma$ coincides with $m_{\frac{n}{n-1}f,\Sigma}$,
and therefore
\begin{equation}\label{eq:conformal_relation}
\lambda^\D_{1,\Sigma}=\inf_{\phi}\,\frac{\displaystyle \int_{\Sigma}\,|\nabla_\Sigma\, \phi|^2_{g_f}\,\e^{-\frac{n+1}{(n-1)^2}f}\,\d v_{g_{f},\Sigma}}{\displaystyle \int_{\Sigma}\,\phi^2\,\d v_{g_{f},\Sigma}}
\leq \e^{-\frac{n+1}{(n-1)^2}\inf_\Sigma f}\lambda^{g_f}_{1,\Sigma}
\end{equation}
by \eqref{eq:variation},
where $\lambda^{g_f}_{1,\Sigma}$ is the first non-zero eigenvalue of the Laplacian on $\Sigma$ induced from $g_f$.

We obtain the following eigenvalue estimate of Choi-Wang type (cf. \cite{CW}, \cite[Theorem 2]{CMZ1}, \cite[Theorem 7]{LW}, \cite[Theorem 3]{MD}, \cite[Theorem 1.1]{F2}):
\begin{prop}\label{prop:CW}
Let $(M,g,f)$ be an $n$-dimensional closed weighted manifold.
For $K > 0$,
we assume $\Ric_f^1 \geq K\,\e^{-\frac{f}{n-1}}\,g$.
Let $\Sigma$ be a closed embedded $f$-minimal hypersurface in $M$.
Then we have
\begin{equation}\label{eq:CW}
    \lambda^\D_{1,\Sigma} \geq \frac{K}{2}.
\end{equation}
\end{prop}
\begin{proof}
The desired estimate has been  proved by the first named author \cite{F2} for general affine connections (see \cite[Theorem 1.1]{F2}).
For the completeness, we here give its proof.
Proposition \ref{prop:BM} tells us that
the universal cover of $M$ is compact;
in particular,
$M$ has finite fundamental group (see also Remark \ref{rem:closed}).

We first show the estimate when $M$ is simply connected.
Then $M$ and $\Sigma$ are orientable since $\Sigma$ is closed and embedded.
Further,
by virtue of Proposition \ref{prop:Frankel},
$\Sigma$ must be connected.
Therefore,
it divides $M$ into two components $U_1$ and $U_2$ (see e.g., \cite[Lemma 6]{LW}).
Let $u$ be an eigenfunction for $\lambda^\D_{1,\Sigma}$.
We may assume
\begin{equation*}
    \int_{\partial U_1} \second_{\partial U_1}(\nabla_{\partial U_1}\, u,\nabla_{\partial U_1}\, u)\ \d m_{f,\partial U_1} \geq 0.
\end{equation*}
Let $\varphi$ be the solution to the following boundary value problem:
\begin{equation*}
    \begin{cases}
        \Delta_{\frac{n}{n-1}f} \,\varphi = 0 & \mbox{ on }U_1,\\
        \varphi = u & \mbox{ on }\partial U_1.
    \end{cases}
\end{equation*}
By using Proposition \ref{prop:1-reilly} over $U_1$, we obtain
\begin{align*}\label{eq:Choi-Wang-1}
    0&\geq K \int_{U_1} |\nabla \varphi|^2 \ \d m_{\frac{n}{n-1}f} - 2\lambda^\D_{1,\Sigma}\int_{\partial U_1} \varphi_\nu \, u \ \d m_{\frac{n}{n-1}f,\partial U_1} + \int_{\partial U_1} \second_{\partial U_1}(\nabla_{\partial U_1} \, u, \nabla_{\partial U_1} \, u)\ \d m_{f,\partial U_1}\\ \notag
    &\geq (K- 2\lambda^\D_{1,\Sigma}) \int_{U_1} |\nabla\varphi|^2 \ \d m_{\frac{n}{n-1}f},
\end{align*}
where $\nu$ denotes the outer unit normal vector field on $\partial U_1$.
This leads us to \eqref{eq:CW}.

For the general case,
in view of the finiteness of the fundamental group,
the lift $\overline{\Sigma}$ of $\Sigma$ is a closed embedded $\bar{f}$-minimal hypersurface in the universal cover,
where $\bar{f}$ is the lift of $f$.
The result in the above paragraph says that
the first non-zero eigenvalue $\lambda^\D_{1,\overline{\Sigma}}$ of $\Delta^\D_{\overline{\Sigma}}$ satisfies
\begin{equation*}
\lambda^\D_{1,\Sigma}\geq \lambda^\D_{1,\overline{\Sigma}} \geq \frac{K}{2}.
\end{equation*}
This completes the proof.
\end{proof}

We arrive at the following volume estimate of Yang-Yau type (cf. \cite{YY}, \cite[Proposition 8]{CMZ1}):
\begin{prop}\label{prop:YY}
Let $(M,g,f)$ be a three dimensional closed weighted manifold. 
For $K > 0$,
we assume $\Ric_f^1 \geq K\,\e^{-\frac{f}{2}}\,g$.
Let $\Sigma$ be a closed embedded $f$-minimal hypersurface in $M$.
Then
\begin{equation*}
 m_{f,\Sigma}(\Sigma)\leq \frac{16\pi}{K}\left(     \frac{2}{|\pi_1(M)|}-\frac{1}{2}\chi(\Sigma)    \right)\,\e^{-\inf_{\Sigma}f+\frac{1}{2}\sup_\Sigma f},
\end{equation*}
where $\chi(\Sigma)$ denotes the Euler characteristic of $\Sigma$.
\end{prop}
\begin{proof}
Similarly to the proof of Proposition \ref{prop:CW},
we first prove the estimate when $M$ is simply connected.
Then $\Sigma$ is orientable,
and we denote its genus by $\gamma$. %\Sigmaがorietable．Yang-Yauの仮定
We consider the conformally deformed metric $g_f$ defined as \eqref{eq:conformal}.
Similarly to the proof of \cite[Proposition 7]{CMZ1},
applying the classical Yang-Yau estimate to the metric $g_f$ tells us that
\begin{equation}\label{eq:YY1}
\e^{-\frac{1}{2}\sup_\Sigma f} m_{f,\Sigma}(\Sigma) \leq m_{\frac{3}{2}f,\Sigma}(\Sigma)=v_{g_f,\Sigma}(\Sigma)\leq \frac{8\pi(\gamma+1)}{\lambda^{g_f}_{1,\Sigma}}.%\e^{-\frac{1}{n-1}\sup_\Sigma f}\,m_{f,\Sigma}(\Sigma)\leq m_{\frac{n}{n-1}f,\Sigma}(\Sigma)
\end{equation}
On the other hand,
by combining \eqref{eq:conformal_relation} and Proposition \ref{prop:CW},
we possess
\begin{equation}\label{eq:YY2}
\e^{-\inf_\Sigma f}\lambda^{g_f}_{1,\Sigma} \geq \lambda^\D_{1,\Sigma}\geq \frac{K}{2}. %\e^{-\frac{n+1}{(n-1)^2}\inf_\Sigma f}
\end{equation}
From \eqref{eq:YY1} and \eqref{eq:YY2},
it follows that
\begin{equation}\label{eq:YY3}
\e^{-\frac{1}{2}\sup_\Sigma f} m_{f,\Sigma}(\Sigma)\leq  \frac{16\pi}{K}(\gamma+1)\,\e^{-\inf_\Sigma f}.
\end{equation}
This proves the desired one.
For the general case,
we complete the proof by applying \eqref{eq:YY3} to the universal cover similarly to the proof of \cite[Proposition 8]{CMZ1}.
\end{proof}

We are now in a position to conclude Theorem \ref{thm:LW}.
\begin{proof}[Proof of Theorem \ref{thm:LW}]
It immediately follows from Propositions \ref{prop:BWS}, \ref{prop:EM} and \ref{prop:YY}.
\end{proof}

\subsection*{{\rm{Acknowledgements}}}
The authors are grateful to Professor Keita Kunikawa for fruitful discussions.
The authors also thank Professors Mattia Fogagnolo, Shin-ichi Ohta and Chao Xia for their comments.
The first named author was supported by JSPS KAKENHI (25KJ0271).
The second named author was supported by JSPS KAKENHI (22H04942, 23K12967).

\end{document}